\newtheorem{assumption}[theorem]{Assumption}
\newcommand{\Acal}{{\cal A}}
\newcommand{\Ical}{{\cal I}}
\newcommand{\Lcal}{{\cal L}}
\newcommand{\Scal}{{\cal S}}
\newcommand{\sign}{\text{sign}}   
\setlist[enumerate]{leftmargin=.5in}
\setlist[itemize]{leftmargin=.5in}
\crefname{hypothesis}{Hypothesis}{Hypotheses}
\title{Relating $\ell_p$ regularization and reweighted $\ell_1$ regularization \thanks{Submitted to the editors DATE.
}}
\author{Hao Wang\thanks{School of Information Science and Technology, ShanghaiTech
University, Shanghai, China
  (\email{haw309@gmail.com}).}
\and Hao Zeng\thanks{School of Information Science and Technology, ShanghaiTech
University, Shanghai, China
  (\email{zenghao@shanghaitech.edu.cn}).}
\and Jiashan Wang\thanks{Department of Mathematics, University of Washington, Seattle, USA}  (\email{jsw1119@gmail.com}).}
\begin{document}

\maketitle

\begin{abstract} We propose a general framework of iteratively reweighted $\ell_1$ methods for solving $\ell_p$ regularization problems. 
We prove that after some iteration $k$, the iterates generated by the proposed methods 
have the same support and sign as the limit points, and are bounded away from 0, so that 
the algorithm behaves like solving a smooth problem in the reduced space. 
As a result, the global convergence can be easily obtained and an update 
strategy for the smoothing parameter is proposed which can  automatically terminate
the updates for zero components. 
  We show that $\ell_p$ regularization problems 
  are locally equivalent to a weighted $\ell_1$ regularization problem and every optimal 
  point corresponds to a   Maximum A Posterior  estimation for independently and non-identically distributed 
  Laplace prior  parameters.  Numerical experiments exhibit the behaviors and the efficiency of our proposed methods.   
 \end{abstract}

\begin{keywords}
 $\ell_p$-norm regularization, sparse optimization problem, iteratively reweighted algorithm, nonconvex regularization, 
 non-Lipschitz continuous, Maximum A Posterior. 
\end{keywords}

\begin{AMS}
	90C06, 90C26, 90C30, 90C90, 49J52, 65K05, 49M37, 62J07 
\end{AMS}

\section{Introduction}
In recent years, sparse regularization problems have attracted considerable attentions because of their wide applications, including machine learning \cite{liu2007sparse,scardapane2017group}, statistics \cite{hastie2015statistical,li2018feature} and compressed sensing \cite{candes2008enhancing, yin2015minimization}.   
Sparse solutions generally lead to better generalization of the model performance from training data to future data. 
A common approach is the $\ell_p$ ($0\le p \le 1)$ regularization technique, which
minimizes  the loss function combined with a convex/nonconvex penalization term such as the $\ell_p$  norm of the model parameters.   
Nonconvex regularization technique with $0<p<1$ nowadays has become popular due to its power in promoting sparsity.

The primary focus of this paper is on analyzing the properties of nonconvex $\ell_p$ regularization, and designing efficient numerical 
algorithms for solving the   $\ell_p$ regularized problem 
\begin{equation}\label{prob.lpproblem}\tag{P}
\begin{aligned}
& \underset{x\in \mathbb{R}^n}{\min} 
& & F(x):= f(x)+ \lambda \psi(x)   \quad \text{with }  \psi(x)=\|x\|_p:= \left(\sum_{i=1}^n |x_i|_p\right)^{1/p}
\end{aligned}
\end{equation}
where $f:\mathbb{R}^n\to \mathbb{R}$ is a continuously differentiable function, $p\in(0,1)$ 
  and $\lambda > 0$ is the regularization parameter. 
This technique is often regarded as a better approximation to the $\ell_0$ regularization than the $\ell_1$ regularization, and   can often 
yield sparser solutions. 


  However, $\ell_p$ regularized problems are  generally difficult to handle and analyze  due to its nonconvex and 
non-Lipschitz continuous  nature.   In fact, Ge \cite{ge2011note} proved that finding the global minimal value of the problem with $\ell_p$-norm $(0<p<1)$ regularization term is strongly NP-Hard. 
Therefore, many works focus on replacing  the nonconvex and nonsmooth regularization 
term  with trackable smooth approximation. For example, 
Chen et al. \cite{chen2010lower} approximate $|x_i|^p$ by   a continuously differentiable function 
\begin{equation*}\label{appro-term1}
\psi_{\mu}(x_i) = 
\begin{cases}
|x_i|, & |x_i|>\mu, \\ 
\dfrac{x_i^2}{2 \mu} + \dfrac{\mu}{2}, & |x_i| \leq \mu,
\end{cases}
\end{equation*}
with $\mu \in \mathbb{R}_+$, which is solved by a  hybrid orthogonal matching pursuit-smoothing gradient method. 
 Lu \cite{lu2014iterative} constructed another  Lipschitz continuous approximation
to $|x_i|^p$
\begin{equation}\label{appro-term2}
\psi_{\mu_{\epsilon}}(x_i) = 
\begin{cases}
|x_i|^p, & |x_i|^p > \mu_{\epsilon}; \vspace{0.1cm}\\
\mu_{\epsilon}, & |x_i|^p \leq \mu_{\epsilon}.
\end{cases}
\end{equation}
with $ \mu_{\epsilon} = \dfrac{\epsilon}{\lambda n}$ and then proposed an iteratively reweighed algorithm.
Chen  \cite{Chen13} proposed a smoothing trust region Newton algorithm for solving the approximated 
problem by replacing $|x_i|^p$ by 
\[
\psi_{\mu}(x_i) = \left(x_i^2 + 4 \mu^2 \right)^{\frac{p}{2}}. 
\]
Another type of approximation technique is to add smoothing perturbation to each $|x_i|$, which mainly includes 
the $\epsilon$-approximation of \eqref{prob.lpproblem} 
\begin{equation}\label{l2-lp-appro1}
 \psi_{\epsilon,1}(x_i)  =  \left( |x_i| + \epsilon \right)^p  \  \text{ and } \    \psi_{\epsilon,2}(x_i) = \left( x_i^2 + \epsilon \right)^{p/2}
\end{equation}
by Chen and Zhou
 \cite{chen2010convergence} 
 and   Lai and Wang \cite{lai2011unconstrained}  
with prescribed small $\epsilon > 0$. 

%

Among these algorithms, iteratively reweighted methods \cite{lu2014iterative,sun2017global,wang2018nonconvex}  for solving 
approximation \eqref{l2-lp-appro1} are popular and proved to be efficient for many cases. 
  At each iteration, it replaces  \eqref{l2-lp-appro1}   by 
\begin{equation}\label{first.l2}
 p(|x_i^k|+\epsilon_i)^{p-1}|x_i|\quad\text{or}\quad \tfrac{p}{2}((x^k_i)^2+\epsilon_i)^{\frac{p}{2}-1}x_i^2,
 \end{equation}
 respectively  
via linearizing   $(\cdot)^p$ and $(\cdot)^{\frac{p}{2}}$ at $x^{k}$. 
 In this problem, large $\epsilon$ will smooth out many local minimizers, while small values make the subproblems difficult to solve and easily trapped into bad local minimizers. In order to approximate \cref{prob.lpproblem} effectively, Lu \cite{lu2014iterative} improved these weights by dynamically updating perturbation parameter $\epsilon_i$ at each iteration to better approximate original problem.

\subsection{Key contributions}
The contributions of this paper can be summarized below. 
\begin{itemize}
	\item We proposed a general framework of iteratively $\ell_1$ methods and studied the convergence, which can include different types of iteratively reweighted $\ell_1$ methods such as first-order and second-order methods. 
	\item  We showed that the proposed  iteratively reweighed $\ell_1$ methods locally  have the same support and sign of the iterates as the optimal solution when applied to non-Lipschitz regularization problems. Consequently, these methods locally behave like solving 
	a smooth problem, which could potentially make  the analysis for these algorithms easier and straightforward.  
		\item We showed that the $\ell_p$ regularization problem is locally equivalent to a weighted $\ell_1$ regularization problem.  That being said,
  any first-order optimal solution of $\ell_p$ regularization problem can be identified with the optimal solution of a weighted $\ell_1$ regularization problem which is equivalent to finding 
  a mode of  Maximum A Posterior (MAP) for independently and non-identically distributed 
  Laplace prior on the parameters.


\end{itemize}

\subsection{Notation}
For $x\in \mathbb{R}^n$, let $x_i$ be the $i$th element of $x$, and 
define the support of $x$ as 
$\Ical(x) = \{ i \mid x_i \neq 0\}$ and its complement as 
$\Acal(x) = \{ i \mid x_i = 0\}$. 
Denote $e$ as the vector of all 1s of appropriate dimension. 
The sign  of $x\in\mathbb{R}^n$  is defined as $(\text{sign}(x))_i = \text{sign}(x_i)$. 
For $H\in\mathbb{R}^{n\times n}$ and index sets $\Acal, \Ical \subset \{1,\ldots, n\}$, 
let $H_{\Acal, \Ical}$ be the matrix consisting of $h_{i,j}, i\in\Acal, j\in\Ical$, 
and $\text{diag}(a_i, i\in \Acal)$ be the diagonal matrix with the elements of vector 
$a_i, i\in\Acal$  on the main diagonal.
The componentwise  product of two vectors $a\in\mathbb{R}^n$ and $b\in\mathbb{R}^n$ is defined as 
$(a\circ b)_i = a_i b_i$.  
Let  $\{-1,0,+1\}^n$ be the set of $n$-dimensional vectors with components
chosen from $\{-1, 0, +1\}$. 

In $\mathbb{R}^n$, denote $\|\cdot\|_p$ as the $\ell_p$ norm with $p\in(0,+\infty)$, i.e., 
$\|x\|_p = \left(\sum_{i=1}^n |x_i|^p\right)^{1/p}$.   Note  that for $p\in(0,1)$, 
this  does not define a proper norm due to its lack of subadditivity. 
If function $f: \mathbb{R}^n \to \bar{ \mathbb{R}}:=\mathbb{R} \cup \{+\infty\}$
is convex, then the subdiferential of $f$ at $\bar x$ is given by 
\[ \partial f(\bar x):=\{ z \mid f(\bar x) + \langle z, x-\bar x\rangle \le f(x),  \  \forall x\in\mathbb{R}^n\}.\]
In particular, for $x\in \mathbb{R}^n$, we use $\partial \|x\|_1$ to denote the set 
$\{ \xi \in \mathbb{R}^n \mid  \xi_i \in \partial |x_i|, i=1,\ldots, n\}.$
For closed convex set $\Omega\subset \mathbb{R}^n$, define the Euclidean distance of point $a\in\mathbb{R}^n$
to $\Omega$ as 
$\text{dist}(a,\Omega) = \min_{b\in \Omega} \| a - b\|_2$.

\section{Iteratively reweighted $\ell_1$ methods} \label{sec.l1lp}
In this section, we introduce the framework of iteratively reweighted $\ell_1$ methods for  solving  \eqref{prob.lpproblem}. 
Given $\epsilon\in\mathbb{R}^n_{++}$, the iteratively reweighted $\ell_1$ method is based on smooth approximation $F(x, \epsilon)$ of $F(x)$
\[ F(x, \epsilon) : = f(x) + \lambda \sum_{i=1}^n (|x_i|+\epsilon_i)^p.\]

We   make the following assumption about $f$. 
\begin{assumption}\label{ass.lip}
	$f$ is Lipschitz differentiable with constant $L_f \ge 0$.
\end{assumption}

At  $k$-th iteration, the algorithm 
formulates a convex local model to approximate $F(x)$
\[ G(x; x^k, \epsilon^k) : = Q_k(x) + \lambda \sum_{i=1}^n w(x_i^k, \epsilon_i^k) |x_i| \] 
where the weights are given by 
$w(x_i^k, \epsilon_i^k) = p(|x_i^k|+\epsilon_i^k)^{p-1}.$ $Q_k(x)$ represents a local approximation model to $f$ at $x^k$, and is generally assumed to be smooth and convex. 
Common approaches include the following.
\begin{itemize}
	\item Proximal first-order approximation: 
	$Q_k(x) = \nabla f(x^k)^T(x-x^k) +  \frac{\beta}{2}\|x-x^k\|_2^2$ with $\beta> 0$. 
	\item  Quasi-Newton approximation: 
	$Q_k(x) = \nabla f(x^k)^T(x- x^k) + \tfrac{1}{2}(x-x^k)^TB^k(x-x^k)$ with $B^k\in\mathbb{R}^{n\times n}$. 
	\item Newton approximation: 
	$Q_k(x) = \nabla f(x^k)^T(x- x^k) + \tfrac{1}{2}(x-x^k)^T \nabla^2f(x^k)(x-x^k)$.  
\end{itemize}
The next iterate $x^{k+1}$ is then computed as the solution of  $\min_{x\in\mathbb{R}^n}  G(x; x^k,\epsilon^k)$: 
\[ x^{k+1} \gets \arg\min_{x\in\mathbb{R}^n} G(x; x^k, \epsilon^k)\]
with $\epsilon$  driven towards to 0:  $\epsilon^{k+1} \le  \mu \epsilon^k$ and $\mu\in(0,1)$. 

We state the framework of this iteratively reweighted $\ell_1$ method in  Algorithm  \ref{alg.framework}.

\begin{algorithm}[htp]
	\caption{General  framework of iteratively reweighted $\ell_1$ (IRL1) methods}
	\label{alg.framework}
	\begin{algorithmic}[1]
		\STATE{\textbf{Input:} $\mu\in(0,1), \epsilon^0\in\mathbb{R}^n_{++}$ and $x^0 $.}
		\STATE{\textbf{Initialize: set $k=0$}. }
		\REPEAT
		\STATE Reweighing:  $w(x_i^k, \epsilon_i^k)= p(|x^k_i|+\epsilon^k_i)^{p-1}. $
		\STATE Compute new iterate: $x^{k+1} \gets \underset{x\in \mathbb{R}^n}{\text{argmin}} \quad \bigg\{Q_k(x) + \lambda \sum\limits_{i=1}^{n} w(x_i^k, \epsilon_i^k) |x_i| \bigg\}.$ 
		\STATE Set $\epsilon^k \le \mu \epsilon^{k-1}$, $k\gets k+1$. 
		
		\UNTIL{convergence}
	\end{algorithmic} 
\end{algorithm}

We make the following assumptions about the choice of $Q_k(\cdot)$. 
\begin{assumption} \label{ass.basic} 
	The initial point $(x^0, \epsilon^0)$ and local convex model  $Q_k(\cdot)$ are  such that    
	\begin{enumerate}
		\item[(i)] 
		The level set 
		$ \Lcal(F^0):= \{ x \mid F(x)  \le F^0\}$ is bounded where $F^0 := F(x^0, \epsilon^0)$.
		\item[(ii)] For all $k\in\mathbb{N}$,  $\nabla Q_k(x^k) = \nabla f(x^k)$, 
		$Q_k(\cdot)$ is strongly convex with constant $M>L_f/2 > 0$, and 
		Lipschitz differentiable with constant $L> 0$. 
	\end{enumerate}
\end{assumption}

This assumption is relatively loose  on the local model  $Q_k(\cdot)$.  In particular, in the proximal method, this condition trivially holds.  In the (quasi-)Newton approximation, it suffices to require  $MI \preceq \nabla^2 f(x^k) \preceq L I$ ($MI \preceq B^k \preceq L I$). 
It should be noticed that  in fact our analysis only relies on these conditions to hold on $ \Lcal(F^0)$.

\subsection{Monotonicity of $F(x, \epsilon)$} In this section, we show that $F(x, \epsilon)$ is monotonically decreasing over our iterates $(x^k, \epsilon^k)$. For the ease of presentation, we define the following two terms
\[
\begin{aligned}
\Delta F(x^{k+1}, \epsilon^{k+1}): = & F(x^k, \epsilon^k)-F(x^{k+1}, \epsilon^{k+1})\\
\Delta G(x^{k+1}; x^k, \epsilon^k): = & G(x^k; x^k, \epsilon^k) - G(x^{k+1}; x^k, \epsilon^k),
\end{aligned}
\]
and use the following  shorthands   
$ w_i^k: = w(x_i^k, \epsilon_i^k) \text{ and } W^k:= \text{diag}(w_1^k, \ldots, w_n^k).$



\begin{proposition}\label{prop.sign} Suppose  \cref{ass.lip} and \ref{ass.basic} hold. Let $\{(x^k, \epsilon^k)\}$ be the sequence generated by \cref{alg.framework}.  It follows that $F(x, \epsilon)$ is monotonically decreasing over $\{(x^k,\epsilon^k)\}$ and the reduction satisfies
	\begin{align}	
	F(x^0, \epsilon^0) - F(x^k; \epsilon^k) & \ge (M-\tfrac{L_f}{2}) \sum_{t=0}^{k-1} \|x^{t+1}-x^t\|^2_2. 
	\label{eq:convergence3}
	\end{align}
	Moreover,	$\lim\limits_{k\to\infty}\|x^{k+1} - x^k\|_2 = 0$, and there exists $C> 0$ such that 
	$\|\nabla Q_k(x^{k+1})\|_\infty \le C$ for any $k \in \mathbb{N}$.
	
\end{proposition}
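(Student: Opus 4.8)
The plan is to establish a per-iteration descent inequality, then telescope it, and finally read off the two limit statements from boundedness. The crucial structural fact is that, for fixed $\epsilon_i \ge 0$, the map $s \mapsto (s+\epsilon_i)^p$ is concave on $[0,\infty)$ because $p\in(0,1)$, and its derivative at $s=|x_i^k|$ is exactly the weight $w_i^k = p(|x_i^k|+\epsilon_i^k)^{p-1}$. Concavity then yields the tangent over-estimate $(|x_i|+\epsilon_i^k)^p \le (|x_i^k|+\epsilon_i^k)^p + w_i^k(|x_i|-|x_i^k|)$ for every $x_i$. Evaluating this at $x=x^{k+1}$ and summing over $i$ bounds the change of the penalty term of $F(\cdot,\epsilon^k)$ from below by $\lambda\sum_i w_i^k(|x_i^k|-|x_i^{k+1}|)$, i.e.\ precisely by the change of the weighted $\ell_1$ term appearing in $G$.

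Next I would control the smooth part. Using \cref{ass.lip}, the descent lemma gives $f(x^{k+1}) \le f(x^k) + \nabla f(x^k)^\top(x^{k+1}-x^k) + \tfrac{L_f}{2}\|x^{k+1}-x^k\|_2^2$, while strong convexity of $Q_k$ together with the matching-gradient condition $\nabla Q_k(x^k)=\nabla f(x^k)$ from \cref{ass.basic}(ii) gives $Q_k(x^{k+1}) \ge Q_k(x^k) + \nabla f(x^k)^\top(x^{k+1}-x^k) + \tfrac{M}{2}\|x^{k+1}-x^k\|_2^2$. The whole point of requiring $\nabla Q_k(x^k)=\nabla f(x^k)$ is that the two linear terms are identical, so subtracting the second estimate from the first cancels them and leaves $f(x^k)-f(x^{k+1}) \ge Q_k(x^k)-Q_k(x^{k+1}) + (M-\tfrac{L_f}{2})\|x^{k+1}-x^k\|_2^2$. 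Adding the penalty estimate from the first step, the $Q_k$-difference and the $\ell_1$-difference recombine into $\Delta G(x^{k+1};x^k,\epsilon^k)$, which is nonnegative since $x^{k+1}$ minimizes the convex model $G$. This produces the one-step inequality $F(x^k,\epsilon^k)-F(x^{k+1},\epsilon^k) \ge (M-\tfrac{L_f}{2})\|x^{k+1}-x^k\|_2^2$.

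It then remains to fold in the $\epsilon$-update and pass to limits. Since $\epsilon^{k+1}\le\mu\epsilon^k$ with $\mu\in(0,1)$ and $t\mapsto t^p$ is increasing, replacing $\epsilon^k$ by $\epsilon^{k+1}$ only decreases each penalty summand, so $F(x^{k+1},\epsilon^{k+1})\le F(x^{k+1},\epsilon^k)$; chaining this with the one-step inequality gives monotonicity of $F(\cdot,\cdot)$ along $\{(x^k,\epsilon^k)\}$ and, after telescoping from $0$ to $k-1$, inequality \eqref{eq:convergence3}. For $\|x^{k+1}-x^k\|_2\to 0$, I would observe that $F(x^k)\le F(x^k,\epsilon^k)\le F^0$ forces $x^k\in\Lcal(F^0)$, which is bounded by \cref{ass.basic}(i); hence $f$ is bounded below along the iterates and, the penalty being nonnegative, $\{F(x^k,\epsilon^k)\}$ is monotone and bounded below, thus convergent, so its consecutive differences vanish and \eqref{eq:convergence3} forces $\|x^{k+1}-x^k\|_2\to0$. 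For the gradient bound I would deliberately avoid the optimality condition of $G$ (the weights $w_i^k$ may blow up as $x_i^k,\epsilon_i^k\to0$) and instead use Lipschitz differentiability of $Q_k$: $\|\nabla Q_k(x^{k+1})\|_\infty \le \|\nabla Q_k(x^{k+1})\|_2 \le \|\nabla f(x^k)\|_2 + L\|x^{k+1}-x^k\|_2$, where $\|\nabla f(x^k)\|_2$ is bounded because $\nabla f$ is continuous on the compact set $\overline{\Lcal(F^0)}$ and $\|x^{k+1}-x^k\|_2$ is bounded, yielding a uniform $C$.

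I expect the main obstacle to lie in the second paragraph: aligning the concave tangent inequality, the descent lemma, and the strong-convexity estimate so that the spurious linear terms cancel exactly and the $Q_k$- and $\ell_1$-differences reassemble into the nonnegative model decrease $\Delta G$, leaving the clean coefficient $M-\tfrac{L_f}{2}$. Once that bookkeeping is done, the monotonicity, the summation bound, and both limit statements follow as routine consequences of boundedness and continuity.
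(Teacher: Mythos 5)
There is a genuine gap, and it is hidden by an arithmetic slip. From the descent lemma ($\tfrac{L_f}{2}$ term) and the strong convexity of $Q_k$ ($\tfrac{M}{2}$ term), subtracting the two estimates after the linear terms cancel yields the coefficient $\tfrac{M-L_f}{2}$, not $M-\tfrac{L_f}{2}$ as you claim; i.e.\ you obtain
$f(x^k)-f(x^{k+1}) \ge Q_k(x^k)-Q_k(x^{k+1}) + \tfrac{M-L_f}{2}\|x^{k+1}-x^k\|_2^2$.
You then use only the qualitative fact $\Delta G(x^{k+1};x^k,\epsilon^k)\ge 0$ from minimality of $x^{k+1}$. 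Chaining these gives $\Delta F \ge \tfrac{M-L_f}{2}\|x^{k+1}-x^k\|_2^2$, and since \cref{ass.basic} only requires $M > L_f/2$ (so $M<L_f$ is allowed), this coefficient can be negative: neither monotonic decrease nor \eqref{eq:convergence3} follows from your argument as written.

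The missing ingredient, which the paper supplies, is a \emph{quantitative} model decrease: using the subproblem optimality condition $\nabla Q_k(x^{k+1}) + \lambda W^k \xi^{k+1}=0$ with $\xi^{k+1}\in\partial\|x^{k+1}\|_1$, together with strong convexity of $Q_k$ expanded at $x^{k+1}$ and convexity of $|\cdot|$, one shows $\Delta G(x^{k+1};x^k,\epsilon^k) \ge \tfrac{M}{2}\|x^{k+1}-x^k\|_2^2$. Adding this extra $\tfrac{M}{2}$ to the $\tfrac{M-L_f}{2}$ from your second paragraph is exactly what produces the stated coefficient $M-\tfrac{L_f}{2}>0$. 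You explicitly chose to avoid the optimality condition of $G$ out of concern that the weights blow up, but that concern is misplaced here: the condition is used only through the inner product $[\nabla Q_k(x^{k+1})+\lambda W^k\xi^{k+1}]^T(x^k-x^{k+1})=0$, which is finite regardless of the size of $w_i^k$. The rest of your argument (the concave tangent bound on the penalty, the $\epsilon$-monotonicity step, the telescoping, the vanishing of $\|x^{k+1}-x^k\|_2$, and the uniform bound on $\|\nabla Q_k(x^{k+1})\|_\infty$ via Lipschitz differentiability and compactness of $\Lcal(F^0)$) matches the paper's route and is sound once the coefficient is repaired.
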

\begin{proof}
	From \cref{ass.lip}  and 
	\ref{ass.basic}, we have
	\[ 
	\begin{aligned} 
	f(x^k) - f(x^{k+1}) \ge & \ \nabla f(x^k)^T(x^k-x^{k+1}) - \frac{L_f}{2} \|x^k - x^{k+1} \|_2^2 \\
	Q_k(x^k) - Q_k(x^{k+1} )  \le  & \  \nabla f(x^k)^T  (x^k -  x^{k+1} ) - \frac{M}{2} \|x^{k+1} - x^k\|_2^2.
	\end{aligned}
	\]
	
	It follows that 
	\begin{equation}\label{q les f} 
	f(x^k) - f(x^{k+1}) \ge  Q_k(x^k) - Q_k(x^{k+1}) + \frac{M- L_f}{2} \|x^k-x^{k+1}\|_2^2.
	\end{equation}
	
	On the other hand, the concavity of $a^p$ on $\mathbb{R}_{++}$ gives 
	$a_1^p \le a_2^p + pa_2^{p-1}(a_1- a_2)$ for any $a_1, a_2 \in \mathbb{R}_{++}$, implying for  $i= 1,\ldots, n$
	\[\begin{aligned}
	(|x_i^{k+1}|+\epsilon_i^k)^p \le &\ (|x_i^k |+\epsilon_i^k)^p + p (|x_i^k |+\epsilon_i^k)^{p-1} (|x_i^{k+1} | - |x_i^k|)\\
	= &\ (|x_i^k |+\epsilon_i^k)^p +  w_i^k (|x_i^{k+1} | - |x_i^k|).
	\end{aligned} \] 
	Summing the above inequality over   $i$ yields 
	\begin{equation}\label{w les w} 
	\sum_{i=1}^n ( |x_i^{k+1}|+\epsilon_i^k)^p   \le  \sum_{i=1}^n (|x_i^k |+\epsilon_i^k)^p +  \sum_{i=1}^n w_i^k (|x_i^{k+1} | - |x_i^k|). 
	\end{equation}
	Combining \eqref{q les f} and \eqref{w les w} gives 
	\begin{equation}\label{eq:convergence1}
\begin{aligned}
\Delta F(x^{k+1}, \epsilon^{k+1}) \geq  \Delta G(x^{k+1}; x^k, \epsilon^k) + \frac{M- L_f}{2} \|x^k-x^{k+1}\|_2^2.
\end{aligned}
	\end{equation}
	 \cref{ass.basic} implies the subproblem solution $x^{k+1}$  satisfies the  optimality condition  
	\begin{equation}\label{kkt subproblem}
	\nabla   Q_k(x^{k+1}) + \lambda  W^k \xi^{k+1}  = 0
	\end{equation}
	with $\xi^{k+1} \in \partial\|x^{k+1}\|_1$. Hence 
    \begin{equation} \label{eq:convergence2}
    \begin{aligned}
    & \ G(x^k; x^k, \epsilon^k) - G(x^{k+1}; x^k, \epsilon^k)\\
    = & \  Q_k(x^k) - Q_k(x^{k+1}) + \lambda \sum_{i=1}^n 
    w_i^k ( |x^k_i| -   |x_i^{k+1}| ) \\
    \ge &\  \nabla Q_k(x^{k+1})^T(x^k-x^{k+1}) +  \tfrac{M}{2} \|x^{k+1} - x^k\|_2^2 + \lambda \sum_{i=1}^n 
    w_i^k\xi_i^{k+1} (x^k_i -   x_i^{k+1})\\
    = &\  [\nabla Q_k(x^{k+1}) + \lambda W^k\xi^{k+1}]^T(  x^k - x^{k+1}) + \tfrac{M}{2} \|x^{k+1} - x^k\|_2^2\\
    = & \ \tfrac{M}{2} \|x^{k+1} - x^k\|_2^2,
    \end{aligned} 
    \end{equation}   
where the   inequality is by  \cref{ass.basic} and the convexity of $|\cdot|$, and  the last equality  is by \eqref{kkt subproblem}.

	 We then combine \eqref{eq:convergence1} and \eqref{eq:convergence2} to get 
	\begin{equation}\label{eq:convergence4} 
	\Delta F(x^{k+1}, \epsilon^{k+1})  \ge   F(x^{k}, \epsilon^{k}) - F(x^{k+1}, \epsilon^{k})  \ge   (M-   \tfrac{ L_f}{2} )\|x^k - x^{k+1}\|_2^2.
	\end{equation}
	Replacing $k$ with $t$ and summing up from $t=0$ to $k-1$, we have 
	\[\sum_{t=0}^{k-1}(  F(x^{t}, \epsilon^{t})  -  F(x^{t+1}, \epsilon^{t+1}) ) \ge  (M-   \tfrac{ L_f}{2} )  \sum_{k=0}^{k-1}  \|x^t -x^{t+1}\|_2^2,\]
	completing the proof of \eqref{eq:convergence3}.

	It follows that 
	$\{x^k\}\subset \Lcal(F^0)$ by $F(x^k) \le F(x^k, \epsilon^k) \le F(x^0, \epsilon^0)$ by \eqref{eq:convergence3}. 
	By  \cref{ass.basic}(i),  it is bounded and $\liminf\limits_{k\to\infty}F(x^k, \epsilon^k) > - \infty$. 
	Taking $k\to \infty$ in \cref{eq:convergence1}, we know 
	\[ (M-\tfrac{L_f}{2}) \sum_{i=0}^{\infty} \|x^{k+1}-x^k\|^2 \le  F(x^0, \epsilon^0) - \liminf\limits_{k\to\infty}F(x^k, \epsilon^k) < \infty,\]
	implying  $\lim_{k\to \infty} \|x^{k+1}-x^{k}\| = 0$. 
	Moreover, it follows from the boundedness of $\{x^k\}$ that 
	there must exist $C>0$ such that  $\|Q_k(x^{k+1})\|_\infty \le C$ for any $k\in\mathbb{N}$. 
\end{proof}

%

\subsection{Locally stable sign}\label{sec.local sign}


We now show that under Assumption~\ref{ass.basic}, after some iteration, the support of the iterates remains unchanged.  The result is summarized in the following theorem. 

\begin{theorem}[Locally stable support]\label{thm.stable.support} Assume Assumption \ref{ass.lip} and \ref{ass.basic} hold and let $\{(x^k, \epsilon^k)\}$ be a sequence generated by \cref{alg.framework}.   $C$ is the constant as defined in \cref{prop.sign}.  Then we have the following
\begin{enumerate}
		\item[(i)]   If $w(x_i^{\tilde k}, \epsilon_i^{\tilde k}) > C/\lambda$ for some ${\tilde k}\in \mathbb{N}$,  then  $x_i^k \equiv 0$ for all $k > \tilde k$. Conversely, 
		if there exists $\hat k>\tilde k$ for any $\tilde k \in\mathbb{N}$ such that $x_i^{\hat k} \neq 0$,  then $w(x_i^k, \epsilon_i^k) \le  C/\lambda$ for all $k\in\mathbb{N}$. 
		\item[(ii)]       There  exist index sets $\Ical^*\cup \Acal^* = \{1,\ldots,n\}$ and $\bar k > 0$, such that  $\forall \  k> \bar k$,  
		$\Ical(x^k)\equiv \Ical^*$ and $\Acal(x^k) \equiv \Acal^*$.
		\item[(iii)] 	For any $i\in\Ical^*$,  it holds that 
		\begin{equation}\label{eq.xboundeps} |x_i^k| > \left(\frac{C}{p \lambda }\right)^{\frac{1}{p-1}} - \epsilon_i^k > 0,\quad   i\in\Ical^*. \end{equation} 
		Therefore, $\{|x_i^k|, i\in\Ical^*, k\in\mathbb{N}\}$ are bounded away from 0 after some $\hat k \in \mathbb{N}$. 
		\item[(iv)]  For any cluster point $x^*$ of $\{x^k\}$, it holds that $\Ical(x^*) = \Ical^*$, $\Acal(x^*) = \Acal^*$ and  
		\begin{equation}\label{eq.xbound} |x^*_i| \ge   \left(\frac{C}{p \lambda }\right)^{\frac{1}{p-1}},\quad i\in \Ical^*. \end{equation}
	\end{enumerate}
\end{theorem}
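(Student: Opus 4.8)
The plan is to run everything off two facts already in hand: the subproblem optimality condition $\nabla Q_k(x^{k+1}) + \lambda W^k \xi^{k+1} = 0$ from \eqref{kkt subproblem}, and the uniform bound $\|\nabla Q_k(x^{k+1})\|_\infty \le C$ from \cref{prop.sign}. Looking at the $i$-th component of the optimality condition, whenever $x_i^{k+1} \neq 0$ we have $\xi_i^{k+1} = \sign(x_i^{k+1})$, so $\lambda w_i^k = |\nabla Q_k(x^{k+1})_i| \le C$. This single observation drives all four parts: a component that is nonzero at step $k+1$ forces the weight computed at step $k$ to lie below $C/\lambda$.

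For part (i) I would first record the contrapositive of the stated implication, namely that $x_i^{k+1} \neq 0$ gives $w_i^k \le C/\lambda$, which is exactly the inequality above; hence $w(x_i^{\tilde k}, \epsilon_i^{\tilde k}) > C/\lambda$ forces $x_i^{\tilde k+1} = 0$. The real work is propagating this forward. Once $x_i^{\tilde k+1} = 0$, the weight is $w_i^{\tilde k+1} = p(\epsilon_i^{\tilde k+1})^{p-1}$, and since $\epsilon$ is driven down ($\epsilon^{k+1} \le \mu \epsilon^k$) while $p-1<0$ makes $t \mapsto t^{p-1}$ decreasing, I get the monotone chain $w_i^{\tilde k+1} = p(\epsilon_i^{\tilde k+1})^{p-1} \ge p(\epsilon_i^{\tilde k})^{p-1} \ge p(|x_i^{\tilde k}|+\epsilon_i^{\tilde k})^{p-1} = w_i^{\tilde k} > C/\lambda$. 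An induction then yields $x_i^k = 0$ and $w_i^k > C/\lambda$ for all $k > \tilde k$, and the converse in (i) is just the contrapositive of this forward implication.

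Parts (ii) and (iii) I would handle together by partitioning $\{1,\ldots,n\}$ through the dichotomy of (i): let $\Acal^*$ be the indices that are eventually zero and $\Ical^*$ those with $x_i^k \neq 0$ infinitely often, which are complementary and exhaustive by construction. For $i \in \Ical^*$ the converse of (i) gives $w_i^k \le C/\lambda$ for \emph{every} $k$, i.e. $(|x_i^k|+\epsilon_i^k)^{p-1} \le C/(p\lambda)$; inverting with the exponent $1/(p-1) < 0$ reverses the inequality and produces exactly \eqref{eq.xboundeps}. Since $\epsilon_i^k \le \mu^k \epsilon_i^0 \to 0$, the right-hand side is positive for all large $k$, which simultaneously establishes (iii) and upgrades ``nonzero infinitely often'' to ``nonzero for all large $k$''; taking $\bar k$ to be the largest of the finitely many index-wise thresholds then settles (ii).

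For (iv) I would take a subsequence $x^{k_j} \to x^*$. Indices in $\Acal^*$ vanish along the tail, so $x_i^* = 0$ there; for $i \in \Ical^*$ I pass to the limit in \eqref{eq.xboundeps} along $k_j$, using $\epsilon_i^{k_j} \to 0$, to obtain $|x_i^*| \ge (C/(p\lambda))^{1/(p-1)} > 0$, which gives \eqref{eq.xbound} and forces $\Ical(x^*) = \Ical^*$, $\Acal(x^*) = \Acal^*$. The main obstacle I anticipate is the bookkeeping in the persistence step of (i) — specifically, verifying that a coordinate, once driven to zero, can never reactivate because the shrinking $\epsilon$ makes its weight monotonically larger — rather than any individual estimate; every bound after that is a mechanical consequence of inverting the weight inequality with $p-1<0$.
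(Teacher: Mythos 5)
Your proposal is correct and follows essentially the same route as the paper: the uniform bound $\|\nabla Q_k(x^{k+1})\|_\infty \le C$ plus the optimality condition \eqref{kkt subproblem} gives the weight bound for active components, the shrinking $\epsilon$ with $p-1<0$ makes a zero component's weight monotonically nondecreasing so that zeros persist, and \eqref{eq.xboundeps} follows by inverting the weight inequality. The only cosmetic difference is in (ii), where the paper argues by contradiction (an index taking zero and nonzero values infinitely often would eventually have weight exceeding $C/\lambda$ on the zero subsequence) while you argue directly that an infinitely-often-nonzero index has an eventually positive lower bound; these are the same inequality read from opposite sides.
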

\begin{proof}

	(i) If $w(x_i^{\tilde  k}, \epsilon_i^{\tilde  k}) > C/\lambda $ for some ${\tilde  k} \in \mathbb{N}$,  then  the optimality condition \eqref{kkt subproblem} 
	implies $x_i^{{\tilde  k} +1} = 0$. Otherwise we have 
	$ | \nabla_i Q_{\tilde  k}(x^{\tilde  k +1}) | =   \lambda w(x_i^{\tilde  k}, \epsilon_i^{\tilde  k})  > C $, 
	contradicting \cref{prop.sign}.  Monotonicity of $(\cdot)^{p-1}$ and  $0 + \epsilon_i^{\tilde k +1} \le |x_i^{\tilde k}| + \epsilon_i^{\tilde k}$ yield  
	\[w(x_i^{{\tilde  k}+1}, \epsilon_i^{{\tilde  k}+1}) =  p(0+ \epsilon_i^{{\tilde  k}+1})^{p-1}  \ge   p( |x_i^{\tilde k}| + \epsilon_i^{\tilde  k})^{p-1}  =   w(x_i^{\tilde  k}, \epsilon_i^{\tilde  k}) > C.\]
	By induction  we know that  
	$x_i^k \equiv 0$ for any $k > \tilde  k$.  This completes the proof of (i). 
	
	
	%
	%
	
	(ii)  Suppose by contradiction this statement is not true.  There exists $j \in \{1,\ldots, n\}$ such 
	that $\{x_j^k\}$ takes zero and nonzero values both for infinite times.  
	Hence, 
	there exists a subsequence $\Scal_1 \cup \Scal_2 = \mathbb{N}$ such that 
	$|\Scal_1|=\infty$, $|\Scal_2|=\infty$  and that  
	\[ x_j^k = 0, \forall k \in \Scal_1 \ \text{ and }\  x_j^k \neq 0, \forall k \in  \Scal_2.\]
	Since $\{ \epsilon_j^k \}_{\Scal_1}$ is monotonically decreasing to 0, there exists $\tilde k \in \Scal_1$ such that 
	\[ w(x_j^{\tilde k} , \epsilon_j^{\tilde k}) = p(|x_i^{\tilde k}| + \epsilon_j^{\tilde k} )^{p-1} = p (\epsilon_j^{\tilde k})^{p-1} > C/\lambda. \]
	It follows that $x_j^k \equiv 0$ for any $k > \tilde k$ by (i) which implies $\{ \tilde k + 1, \tilde k + 2, \ldots\} \subset \Scal_1$ and $|\Scal_2| < \infty$. This violates the assumption   $|\Scal_2| = \infty.$ Hence, (ii) is true.

	(iii)  Combining  (i) and (ii), we know for any   $i\in \Ical^*$, $w_i^k \le  C/\lambda$, which is equivalent to \eqref{eq.xboundeps}. This proves (iii). 

	(iv) For $i\in \Acal^*$,  (ii) implies that $i\in \Acal(x^*)$. 
	For $i \in \Ical^*$, (ii) and (iii) imply that   \eqref{eq.xbound}  is true, meaning 
	$i \in \Ical(x^*)$. 
\end{proof}

The above theorem indicates an interesting property of the iterates generated by \cref{alg.framework}.   All the cluster points of the iterates 
have the same support, so that we can use 
$\Ical^*= \Ical(x^*), \Acal^*= \Acal(x^*)$. 
%
We continue to  show the signs of  $\{x^k\}$ also  remain unchanged for sufficiently large $k$.    
Combined with \cref{thm.stable.support}, 
this means the iterates $\{x^k_{\Ical^*}\}$ will eventually 
stay in the interior of the same orthant.  This result is shown in the following theorem. 

\begin{theorem}[Locally stable sign]\label{thm.stable.sign}  
	Suppose $\{x^k\}$ be a sequence generated by \cref{alg.framework} and Assumptions  \ref{ass.lip} and \ref{ass.basic} are satisfied.  
	There exists $\bar k  \in \mathbb{N}$, such that 
	the sign of $\{x^k\}$ are fixed for any $k > \bar k$, i.e.,  
	$\text{sign}(x^k)\equiv s$ for some  $s \in \{-1,0,+1\}^n$.
\end{theorem}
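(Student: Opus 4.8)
The plan is to split the index set according to the limiting support furnished by \cref{thm.stable.support}(ii) and to treat the zero and nonzero coordinates separately. For coordinates $i \in \Acal^*$ the work is already done: \cref{thm.stable.support}(ii) guarantees $x_i^k = 0$, hence $\text{sign}(x_i^k) = 0$, for every $k$ beyond the threshold produced there, so these entries of the sign vector are already frozen. The entire content of the theorem therefore reduces to showing that no coordinate $i \in \Ical^*$ can flip between positive and negative values infinitely often.

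The two ingredients I would combine are a uniform lower bound on the active magnitudes and the vanishing of consecutive steps. From \cref{thm.stable.support}(iii), $|x_i^k| > \tau - \epsilon_i^k$ for $i \in \Ical^*$, where $\tau := (C/(p\lambda))^{1/(p-1)} > 0$; since $\epsilon^{k+1} \le \mu \epsilon^k$ with $\mu \in (0,1)$ forces $\epsilon_i^k \to 0$, there is an index $\hat k$ beyond which $\epsilon_i^k < \tau/2$ for all $i$, and hence $|x_i^k| \ge \tau/2 =: \delta > 0$ uniformly over the finite set $\Ical^*$. On the other hand, \cref{prop.sign} gives $\|x^{k+1} - x^k\|_2 \to 0$, so in particular $|x_i^{k+1} - x_i^k| \to 0$ for each $i$.

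I would then argue by a short contradiction. Choose $\bar k \ge \hat k$ large enough that $|x_i^{k+1} - x_i^k| < \delta$ for all $i$ and all $k > \bar k$. If some $i \in \Ical^*$ had $x_i^k$ and $x_i^{k+1}$ of opposite sign for such a $k$, then, because each has magnitude at least $\delta$, we would get $|x_i^{k+1} - x_i^k| = |x_i^k| + |x_i^{k+1}| \ge 2\delta$, contradicting the choice of $\bar k$. Thus $\text{sign}(x_i^{k+1}) = \text{sign}(x_i^k)$ for every $k > \bar k$ and every $i \in \Ical^*$, and together with the frozen zero coordinates this yields a single vector $s \in \{-1,0,+1\}^n$ with $\text{sign}(x^k)\equiv s$ for all $k > \bar k$.

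The only real subtlety, and the step I would be most careful with, is the uniformity of the lower bound $\delta$ across the support: the estimate in \cref{thm.stable.support}(iii) is coordinatewise and involves $\epsilon_i^k$, so I must invoke the finiteness of $\Ical^*$ together with the uniform decay $\epsilon_i^k \to 0$ to extract one $\delta$ and one threshold valid for all active coordinates simultaneously before the vanishing-step argument can be applied. Everything else is routine once \cref{thm.stable.support} and \cref{prop.sign} are in hand.
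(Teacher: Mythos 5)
Your proposal is correct and follows essentially the same route as the paper's proof: both combine the lower bound $|x_i^k| > (C/(p\lambda))^{1/(p-1)} - \epsilon_i^k$ from \cref{thm.stable.support}(iii) with $\|x^{k+1}-x^k\|_2 \to 0$ from \cref{prop.sign}, and derive a contradiction from the fact that a sign flip on the stabilized support would force a step of length at least a fixed positive constant (the paper gets $\sqrt{2}\bar\epsilon$ where you get $2\delta$; both suffice). Your explicit handling of the uniformity of the threshold over the finite set $\Ical^*$ is a point the paper glosses over, but it is the same argument.
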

\begin{proof} From \cref{thm.stable.support}, we only have to show that the sign of 
	$x_i^k, i\in\Ical^*$ is fixed for sufficiently large $k$.  
	By \cref{prop.sign} and   \cref{thm.stable.support}(iii), there exists   $\bar k \in\mathbb{N}$, such that  	for any $k> \bar k$
	\begin{align}
	&\ \|x^{k+1} - x^k\|_2 <  \ \bar \epsilon: =  \frac{1}{2} \left(\frac{C}{p \lambda }\right)^{\frac{1}{p-1}} \label{contradiction here} \\ 
	\text{ and } & \  |x_i^k| >   \  \bar \epsilon, \quad  \forall i\in \Ical^*. \label{coro.support}
	\end{align}
  
      We prove this by contradiction. Assume there exists   $j\in\Ical^*$ such that  the sign of $x_j$ 
	changes after $\bar k$.  
	Hence there must be  $\hat k  \ge   \bar k  $ such that $x_j^{\hat k}x_j^{\hat k+1} < 0 $. 
         It follows that  
	\[  \|x^{\hat k+1} - x^{\hat k}\|_2  \ge  |x_j^{\hat k+1} - x_j^{\hat k}| = \sqrt{ (x_j^{\hat k+1} )^2   + (x_j^{\hat k})^2  - 2x_j^{\hat k} x_j^{\hat k+1} }
	> \sqrt{ \bar\epsilon ^2   + \bar\epsilon ^2}
	=    \sqrt{2}\bar\epsilon,\]
	where the last inequality is by \cref{coro.support}.  
	This contradicts  with \eqref{contradiction here}, completing the proof.  
\end{proof}

The locally stable support and sign of the iterates imply that 
for sufficiently large $k$,    the algorithm is equivalent to solving a smooth problem in the reduced space $\mathbb{R}^{\Ical^*}$. 
Our analysis in the remainder of this paper is based on this observation.

\subsection{Global convergence}\label{sec.global}





We now provide the convergence of the framework of  iteratively weighted $\ell_1$ method. 

The first-order necessary condition \cite{lu2014iterative} of \eqref{prob.lpproblem} is 
\begin{equation} \label{eq:optimalcondition}
  \nabla_if(x^*)+\lambda p |x_i^*|^{p-1} \sign( x_i^*) =0 \quad \text{for} \quad i\in\Ical(x^*).
\end{equation}
The following theorem shows that every limit point of 
the iterates is a first-order stationary solution.

\begin{theorem}\label{th:limitpoint}  Suppose 
	 \cref{ass.lip} and 
	\ref{ass.basic} hold.  Let $\{x^k\}$ be a sequence generated by \cref{alg.framework} and $\Omega$ be the set 
	of limit points of $\{x^k\}$. 
	Then  $\Omega \neq \emptyset$
	and any $x^* \in \Omega$ is first-order optimal for \cref{prob.lpproblem}.
	Moreover,  any $x^*\in\Omega$ with $\Acal(x^*)\neq \emptyset$ is not a maximizer of \cref{prob.lpproblem}.
\end{theorem}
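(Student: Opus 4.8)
The plan is to handle the three assertions in turn, relying on the stable-support and stable-sign results already established.

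\textbf{Nonemptiness of $\Omega$.} By \cref{prop.sign} the iterates satisfy $\{x^k\}\subset\Lcal(F^0)$, and this level set is bounded by \cref{ass.basic}(i). The Bolzano--Weierstrass theorem then furnishes a convergent subsequence, so $\Omega\neq\emptyset$.

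\textbf{First-order optimality.} I would fix $x^*\in\Omega$ and a subsequence $\{x^k\}_{k\in\Scal}$ with $x^k\to x^*$. Since $\lim_{k\to\infty}\|x^{k+1}-x^k\|_2=0$ by \cref{prop.sign}, we also have $x^{k+1}\to x^*$ along $\Scal$, and $\epsilon^k\to 0$ because $\epsilon^{k+1}\le\mu\epsilon^k$ with $\mu\in(0,1)$. By \cref{thm.stable.support} and \cref{thm.stable.sign} there is $\bar k$ beyond which $\Ical(x^k)\equiv\Ical^*=\Ical(x^*)$ and $\sign(x^k)\equiv s$ with $s_i=\sign(x_i^*)$, so I discard the indices $k\le\bar k$ from $\Scal$. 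Writing the componentwise optimality condition \eqref{kkt subproblem} for $i\in\Ical^*$ gives
\[ \nabla_i Q_k(x^{k+1}) + \lambda\, p\,(|x_i^k|+\epsilon_i^k)^{p-1}\, s_i = 0, \]
where $\xi_i^{k+1}=\sign(x_i^{k+1})=s_i$ since $x_i^{k+1}\neq 0$. Three limits remain to be checked: first, $\nabla_i Q_k(x^{k+1})\to\nabla_i f(x^*)$, which follows from $\|\nabla Q_k(x^{k+1})-\nabla Q_k(x^k)\|_2\le L\|x^{k+1}-x^k\|_2\to 0$ together with the identity $\nabla Q_k(x^k)=\nabla f(x^k)$ in \cref{ass.basic}(ii) and continuity of $\nabla f$; second, $p(|x_i^k|+\epsilon_i^k)^{p-1}\to p|x_i^*|^{p-1}$, which is legitimate because $|x_i^*|>0$ for $i\in\Ical^*$ keeps the base bounded away from $0$; and third, $s_i=\sign(x_i^*)$ is constant. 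Passing to the limit recovers \eqref{eq:optimalcondition}, so $x^*$ is first-order optimal.

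\textbf{Not a maximizer.} Suppose $x^*\in\Omega$ has some $i\in\Acal(x^*)$, i.e. $x_i^*=0$, and perturb along $e_i$. Since $\nabla f$ is Lipschitz, $f(x^*+te_i)-f(x^*)=t\,\nabla_i f(x^*)+O(t^2)$, while the penalty changes only in its $i$-th term by $\lambda|t|^p$, giving
\[ F(x^*+te_i)-F(x^*) = \lambda|t|^p + t\,\nabla_i f(x^*) + O(t^2). \]
Because $p\in(0,1)$, the term $\lambda|t|^p$ dominates both $t\,\nabla_i f(x^*)$ and $O(t^2)$ as $t\to 0$, so $F(x^*+te_i)>F(x^*)$ for all sufficiently small $t\neq 0$. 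Thus every neighborhood of $x^*$ contains points of strictly larger objective value, and $x^*$ cannot be a (local) maximizer.

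The delicate point is the second limit in the first-order argument: the reweighting function $p(\cdot)^{p-1}$ is singular at $0$, so passing to the limit in the weight is meaningful only because \cref{thm.stable.support}(iv) guarantees $|x_i^*|>0$ for $i\in\Ical^*$; without the stable-support conclusion the weights could blow up and the limiting identity would be vacuous. Coupling this with the uniform Lipschitz constant $L$ to control $\nabla_i Q_k(x^{k+1})$ is the crux of the proof, whereas the nonemptiness and non-maximizer claims are comparatively routine.
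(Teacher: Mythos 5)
Your argument for nonemptiness and first-order optimality is correct and essentially identical to the paper's: both pass to the limit in the subproblem optimality condition on the stabilized support, using $\nabla Q_k(x^k)=\nabla f(x^k)$, the Lipschitz bound $\|\nabla Q_k(x^{k+1})-\nabla Q_k(x^k)\|_2\le L\|x^{k+1}-x^k\|_2$, and the fact that $|x_i^*|>0$ on $\Ical^*$ keeps the weight $p(|x_i^k|+\epsilon_i^k)^{p-1}$ convergent. The one genuine difference concerns the final claim: the paper's proof stops after establishing first-order optimality and never addresses the ``not a maximizer'' assertion, whereas your coordinate-perturbation argument (the $\lambda|t|^p$ term dominating $t\,\nabla_i f(x^*)+O(t^2)$ for $i\in\Acal(x^*)$) correctly supplies that missing piece.
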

\begin{proof}  
	Boundedness of $\Lcal(F^0)$ from \cref{ass.basic} implies $\Omega \neq \emptyset$.
	Let $x^*$ be a limit point of $\{x^k\}$ with subsequence $\{x^k\}_{\Scal} \to x^*$. 	
	From Theorem \ref{thm.stable.support} and  \ref{thm.stable.sign}, there exists $\bar k\in\mathbb{N}$ such that 
	for any   $k > \bar k$, the sign of $x^k$ stays the same. 
	
	Optimality condition of subproblem yields
	\[
	\nabla_iQ_k(x^{k+1})+ \lambda p(|x_i^k|+\epsilon_i^k)^{p-1}\text{sign}(x_i^{k+1})  = 0,  \quad    i\in \Ical(x^*). 
	\]
	Taking the limit on $\Scal$, we have 
	for each $   i\in \Ical(x^*)$, 
	\[\begin{aligned}
	0  = &  \lim_{k\to\infty \atop k\in\Scal} |  \nabla_iQ_k(x^{k+1})+ \lambda p(|x_i^k|+\epsilon_i^k)^{p-1} \text{sign}(x_i^{k+1}) |  \\
	\ge &  \lim_{k\to\infty \atop k\in\Scal} | \nabla_iQ_k(x^{k}) + \lambda p(|x_i^k|+\epsilon_i^k)^{p-1} \text{sign}(x_i^{k}) |  - | \nabla_iQ_k(x^{k+1}) - \nabla_iQ_k(x^{k})|  \\
		\ge &  \lim_{k\to\infty \atop k\in\Scal} | \nabla_iQ_k(x^{k}) + \lambda p(|x_i^k|+\epsilon_i^k)^{p-1} \text{sign}(x_i^{k}) |  - \| \nabla Q_k(x^{k+1}) - \nabla Q_k(x^{k})\|_1  \\
				\ge &  \lim_{k\to\infty \atop k\in\Scal} | \nabla_iQ_k(x^{k}) + \lambda p(|x_i^k|+\epsilon_i^k)^{p-1} \text{sign}(x_i^{k}) |  - \sqrt{n}\| \nabla Q_k(x^{k+1}) - \nabla Q_k(x^{k})\|_2  \\
	\ge &     \lim_{k\to\infty \atop k\in\Scal}	|  \nabla_if(x^k) +\lambda p(|x_i^k|+\epsilon_i^k)^{p-1} \text{sign}(x_i^{k}) |  - \sqrt{n} L \| x^{k+1} - x^{k}\|_2 \\
	=   &\ \nabla_if(x^*) + \lambda p |x_i^*|^{p-1} \sign(x_i^*),
	\end{aligned} \]
	where the second inequality is due to 
	\[   \| \nabla Q_k(x^{k+1}) -   \nabla Q_k(x^k)\|_2 \le   L \|x^{k+1} - x^k\|_2,\]
	by  \cref{prop.sign} and Assumption \ref{ass.basic}.	Therefore, $x^*$ is first-order optimal. 
\end{proof}

%

%


\subsection{Uniqueness of limit points}\label{sec.unique}

By Theorem \ref{thm.stable.support} and  \ref{thm.stable.sign},   $x^k_{\Ical^*}\equiv 0$ for sufficiently, meaning 
 the IRL1 algorithm behaves like solving a smooth problem on 
the reduced space $\mathbb{R}^{\Ical^*}$ . We can thus derive various conditions that guarantee  
the  uniqueness of the limit points.  For example, we can  show  the uniqueness of limit points 
under \emph{Kurdyka-\L ojasiewicz} (KL) property  \cite{attouch2013convergence,bolte2014proximal} of $F$, which is generally believed to be a weak assumption 
needed in the analysis for many algorithms. However, due to limit of space, we only  provide the following 
sufficient condition to guarantee the uniqueness of a limit point $x^*$ of $\{x^k\}$.

\begin{theorem}
	Suppose \cref{ass.lip} and 
	\ref{ass.basic} are true.  Then  $\{x^k\}$ either has a unique limit point, or its limit points form
	 a compact connected set contained in the same orthant on which the objective has the same value.  
	 In particular,  at a limit point $x^*$, if 
	$[\nabla^2 f(x^*)]_{\Ical^*, \Ical^*} + \lambda diag( \sign(x_i^*)p(p-1)(x_i^*)^{p-2}, i\in \Ical^*)$ is 
	nonsingular, 
	then $x^*$ is the unique limit point. 
 \end{theorem}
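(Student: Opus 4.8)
The plan is to first pin down the topological structure of the limit-point set $\Omega$ and then, under the stated nonsingularity, to collapse this structure to a single point; throughout I work in the reduced space, using that by \cref{thm.stable.support} and \cref{thm.stable.sign} every iterate with large index and every limit point shares the support $\Ical^*$ and a common sign $s\in\{-1,0,+1\}^n$. The first task is to show $\Omega$ is nonempty, compact, and connected. Nonemptiness and compactness follow from boundedness of $\{x^k\}\subset\Lcal(F^0)$ (\cref{ass.basic}(i) and \cref{prop.sign}). For connectedness I use the standard argument for sequences with vanishing increments, available since $\|x^{k+1}-x^k\|_2\to 0$ by \cref{prop.sign}: if $\Omega=\Omega_1\cup\Omega_2$ split into nonempty disjoint compact pieces with $d:=\dist(\Omega_1,\Omega_2)>0$, then the open $d/3$-neighborhoods $N_1,N_2$ are disjoint, cover $\Omega$, and satisfy $\dist(N_1,N_2)\ge d/3$; since $\dist(x^k,\Omega)\to 0$, the tail of $\{x^k\}$ lies in $N_1\cup N_2$ and visits each infinitely often, forcing infinitely many steps of length at least $d/3$ and contradicting $\|x^{k+1}-x^k\|_2\to 0$. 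That $\Omega$ lies in a single orthant is then immediate from \cref{thm.stable.support} and \cref{thm.stable.sign}. Constancy of the objective on $\Omega$ follows because $F(x^k,\epsilon^k)$ is monotone (\cref{prop.sign}) and bounded below, hence convergent to some $F^*$, while along any subsequence $x^k\to x^*$ we have $\epsilon^k\to 0$, so by continuity $F(x^k,\epsilon^k)\to f(x^*)+\lambda\sum_i|x_i^*|^p=F(x^*)$, giving $F(x^*)=F^*$ for all $x^*\in\Omega$; the stated dichotomy then distinguishes only whether $\Omega$ is a singleton.

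For the ``in particular'' statement I pass to the smooth reduced objective. On the open orthant determined by $s$, with the $\Acal^*$-coordinates frozen at $0$, the regularizer of $\tilde F(y):=f(y)+\lambda\sum_{i\in\Ical^*}|y_i|^p$ is $C^\infty$ away from the axes, and the relevant coordinates are bounded away from $0$ by \cref{thm.stable.support}(iii); moreover $\nabla\tilde F$ is differentiable at $x^*$ with Jacobian equal to the matrix $H^*$ displayed in the statement, since $\nabla^2 f(x^*)$ exists, and $\nabla\tilde F$ restricted to the support is exactly the left-hand side of \eqref{eq:optimalcondition}. By \cref{th:limitpoint} every point of $\Omega$ is a zero of $\nabla\tilde F$. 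Now suppose $H^*$ is nonsingular while $\Omega\neq\{x^*\}$; since a connected set with more than one point has no isolated points, there exist zeros $z_m\in\Omega$ with $z_m\to x^*$ and $z_m\neq x^*$. Setting $u_m:=(z_m-x^*)/\|z_m-x^*\|_2$, differentiability of $\nabla\tilde F$ at $x^*$ gives $0=\nabla\tilde F(z_m)=H^*(z_m-x^*)+o(\|z_m-x^*\|_2)$, whence $H^*u_m\to 0$; passing to a subsequence with $u_m\to u$, $\|u\|_2=1$, yields $H^*u=0$, contradicting nonsingularity. Hence $\Omega=\{x^*\}$.

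The step I expect to be the main obstacle is the connectedness of $\Omega$, the only genuinely topological ingredient: it requires carefully combining $\dist(x^k,\Omega)\to 0$ with $\|x^{k+1}-x^k\|_2\to 0$ to preclude the iterates from jumping between two separated components. Once connectedness is secured, the uniqueness argument is a short linearization in which nonsingularity of $H^*$ rules out a sequence of distinct nearby stationary points, which is precisely what a nontrivial connected set of first-order-stationary limit points would furnish.
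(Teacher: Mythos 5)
Your proof is correct and follows essentially the same route as the paper's: establish that the limit set is compact and connected via $\|x^{k+1}-x^k\|_2\to 0$, then show that nonsingularity of the reduced Hessian makes $x^*$ an isolated critical point, contradicting a nontrivial connected set of stationary limit points. The only difference is that you supply self-contained arguments for the two facts the paper outsources to citations (the Ostrowski-type connectedness lemma and the isolated-critical-point result, which you replace with a direct linearization along $z_m\to x^*$), and you additionally verify the constancy of the objective on $\Omega$, which the paper's proof asserts in the statement but does not actually prove.
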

\begin{proof}
 If there exist multiple cluster points  for $F( [x_{\Ical^*}; 0_{\Acal^*}])$,   
	we have from  $\|x^{k+1}-x^k\|_2 \to 0$ by \cref{prop.sign} and  \cite[Lemma 2.6]{fejer} 
	that the set of cluster points of $\{x^k\}$ is a compact connected set.
	
	On the other hand, it is obvious that for any $x^*\in\Omega$ satisfying \eqref{eq:optimalcondition},
	 $x^*_{\Ical^*}$  is the optimal solution of 
the reduced problem of $\ell_p$ regularization 
\[ \min_{x_{\Ical^*}\in \mathbb{R}^{\Ical^*}} \ F( [x_{\Ical^*}; 0_{\Acal^*}]):= f([x_{\Ical^*}; 0_{\Acal^*}])+\lambda\sum_{i\in \Ical^*}  |x_i|^p.\]

	By \cite[Theorem 7.3.5]{patrikalakis2009shape}, if 
	\[ [\nabla^2 F( [x_{\Ical^*}; 0_{\Acal^*}])]_{\Ical^*, \Ical^*}=
	[\nabla^2 f(x^*)]_{\Ical^*, \Ical^*} + \lambda diag( \sign(x_i^*)p(p-1)(x_i^*)^{p-2}, i\in \Ical^*) \]
	is 
	nonsingular at $x_{\Ical^*}^*$,  then $x^*$ is an isolated critical point.
	 However, we have shown  that $\Omega^*$ is a compact connected set and each element is a critical point---a contradiction.  
	 Therefore, $x^*$ must be the unique limit point.  
\end{proof}

\subsection{Smart  $\epsilon$ updating strategies}
\label{sec.updating}

For iteratively reweighted methods, it is helpful to start with 
a relatively large $\epsilon$ and gradually reduce it to 0, since this may prevent the algorithm 
from quickly getting trapped into a local minimum.  
However, as the iteration proceeds,  we need to let $\epsilon_i \to 0, i\in \Ical(x^*)$ 
  to obtain convergence, and keep $\epsilon_i \to 0, i\in \Acal(x^*)$ updated slowly or even 
fixed after some iterations to prevent  potential numerical issues or from becoming stuck at a local minimum. 
Such a strategy may need the estimate of $\Ical(x^*)$ and $\Acal(x^*)$, which are generally unknown 
at the beginning.
The updating strategy, named as ``smart reweighting'',   is  as follows. 
\begin{equation}
\boxed{ 
\tag{SR}\label{update.eps}
\begin{cases}
\epsilon_i^{k+1} = \ \ \epsilon_i^k  &\quad \text{ if  } \  x_i^{k+1} = 0, \\
\epsilon_i^{k+1} \le \mu \epsilon_i^k &\quad \text{ if  } \   x_i^{k+1} \ne 0. 
\end{cases}
}
 \end{equation}

If  we update  $\epsilon$  in  \cref{alg.framework} according to \eqref{update.eps}, one can easily see that 
\cref{prop.sign} still holds true. Furthermore, we have the following results. 

\begin{theorem}
	Suppose \cref{ass.lip} and 
	\ref{ass.basic} are true, and  $\{x^k\}$ are generated by \cref{alg.framework} with $\epsilon$ updated according 
	to \eqref{update.eps}.  
	The following hold true 
	\begin{enumerate}
	\item[(i)]  if $\lim\limits_{k\to\infty} \epsilon_i^k \to 0$, then $\liminf\limits_{k\to\infty} |x_i^k| > 0$; 
	\item[(ii)]  if $\lim\limits_{k\to\infty} \epsilon_i^k > 0$ meaning $\epsilon_i$ is not updated after some iteration $\tilde k$, then 
	$\lim\limits_{k\to\infty}  x_i^k = 0$. 
	\item[(iii)] if $\epsilon_i^k \equiv \epsilon_i^{\tilde k}$, then $w_i^{\tilde k} \ge \limsup\limits_{k\to\infty}  \frac{|\nabla_i f(x^k)|}{ \lambda}$
	 and $\epsilon_i^{\tilde k} \le  \liminf\limits_{k\to\infty}  \left( \frac{p \lambda}{|\nabla_i f(x^k)|}\right)^{\frac{1}{1-p}}$. 
	\end{enumerate} 
 \end{theorem}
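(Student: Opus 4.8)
The plan is to exploit the sharp dichotomy that \eqref{update.eps} imposes coordinatewise. First I would record the bookkeeping facts about $\{\epsilon_i^k\}$: by \eqref{update.eps} this sequence is nonincreasing, hence converges to some $\epsilon_i^\infty\ge 0$, and a \emph{strict} decrease (by a factor at most $\mu<1$) occurs at step $k$ if and only if $x_i^{k+1}\neq 0$. Since each such decrease shrinks $\epsilon_i$ by the definite factor $\mu$, it follows that $\epsilon_i$ is updated infinitely often exactly when $\epsilon_i^k\to 0$, while finitely many updates is equivalent to $\epsilon_i^\infty>0$ together with $x_i^{k+1}=0$ for all large $k$. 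This observation already disposes of part (ii): if $\lim_k\epsilon_i^k>0$, then $\epsilon_i$ can have been updated only finitely often, so there is $\tilde k$ with $x_i^{k}=0$ for every $k>\tilde k$, whence $\lim_k x_i^k=0$.

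For part (i) I would argue by contraposition through the mechanism behind \cref{thm.stable.support}(i), and the crucial point is that this mechanism survives the switch to \eqref{update.eps}. Indeed, if $w_i^{\tilde k}=p(|x_i^{\tilde k}|+\epsilon_i^{\tilde k})^{p-1}>C/\lambda$ for some $\tilde k$, then the optimality condition \eqref{kkt subproblem} together with the bound $\|\nabla Q_k(x^{k+1})\|_\infty\le C$ from \cref{prop.sign} forces $x_i^{\tilde k+1}=0$; but now \eqref{update.eps} \emph{freezes} $\epsilon_i$, so $w_i^{\tilde k+1}=p(\epsilon_i^{\tilde k})^{p-1}\ge w_i^{\tilde k}>C/\lambda$, and an induction gives $x_i^k\equiv 0$ and $\epsilon_i^k\equiv\epsilon_i^{\tilde k}>0$ for all $k>\tilde k$, contradicting $\epsilon_i^k\to0$. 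Hence whenever $\epsilon_i^k\to0$ we must have $w_i^k\le C/\lambda$ for every $k$; rewriting this exactly as in \eqref{eq.xboundeps} yields $|x_i^k|\ge (C/(p\lambda))^{1/(p-1)}-\epsilon_i^k$, and letting $k\to\infty$ with $\epsilon_i^k\to0$ gives $\liminf_k|x_i^k|\ge (C/(p\lambda))^{1/(p-1)}>0$.

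For part (iii) I would start from the fact, already available from the bookkeeping step, that the freeze $\epsilon_i^k\equiv\epsilon_i^{\tilde k}$ forces $x_i^k=0$ for all $k>\tilde k$; enlarging $\tilde k$ by one if necessary we may assume $x_i^{\tilde k}=0$ as well, so the weight is constant, $w_i^k=p(\epsilon_i^{\tilde k})^{p-1}=w_i^{\tilde k}$. At such a step \eqref{kkt subproblem} reads $\nabla_iQ_k(x^{k+1})=-\lambda w_i^{\tilde k}\xi_i^{k+1}$ with $|\xi_i^{k+1}|\le1$, giving $|\nabla_iQ_k(x^{k+1})|\le\lambda w_i^{\tilde k}$. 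Using $\nabla Q_k(x^k)=\nabla f(x^k)$ and the Lipschitz bound $\|\nabla Q_k(x^{k+1})-\nabla Q_k(x^k)\|_2\le L\|x^{k+1}-x^k\|_2$ from \cref{ass.basic}, I obtain $|\nabla_if(x^k)|\le\lambda w_i^{\tilde k}+L\|x^{k+1}-x^k\|_2$; taking $\limsup_{k\to\infty}$ and invoking $\|x^{k+1}-x^k\|_2\to0$ from \cref{prop.sign} produces the first inequality $w_i^{\tilde k}\ge\limsup_k|\nabla_if(x^k)|/\lambda$. The second then follows by inversion: since $t\mapsto (p\lambda/t)^{1/(1-p)}$ is continuous and strictly decreasing, $\liminf_k(p\lambda/|\nabla_if(x^k)|)^{1/(1-p)}=(p\lambda/\limsup_k|\nabla_if(x^k)|)^{1/(1-p)}$, and solving $p(\epsilon_i^{\tilde k})^{p-1}=w_i^{\tilde k}\ge\limsup_k|\nabla_if(x^k)|/\lambda$ for $\epsilon_i^{\tilde k}$ returns exactly $\epsilon_i^{\tilde k}\le\liminf_k(p\lambda/|\nabla_if(x^k)|)^{1/(1-p)}$.

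The main obstacle I anticipate is verifying that the support-freezing mechanism of \cref{thm.stable.support}(i) is not broken by \eqref{update.eps}: under the original rule $\epsilon_i$ decreased along every coordinate, and that monotone decrease was used to push the weights upward, whereas here one must instead exploit that a coordinate hitting $0$ \emph{stops} $\epsilon_i$ from decreasing and thereby keeps $w_i^k$ above $C/\lambda$. The remaining difficulties are purely analytic: justifying the $\limsup$/$\liminf$ inversion in part (iii), where the bound becomes vacuous in the degenerate case $\limsup_k|\nabla_if(x^k)|=0$ and is the stated estimate otherwise.
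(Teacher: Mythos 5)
Your proof is correct and follows essentially the same route as the paper's: in all three parts the key mechanism is that a weight exceeding $C/\lambda$ forces the coordinate to zero permanently, which under \eqref{update.eps} freezes $\epsilon_i$, combined with the subproblem optimality condition \eqref{kkt subproblem} and $\|x^{k+1}-x^k\|_2\to 0$ from \cref{prop.sign}. You merely reorganize part (i) as a uniform bound $w_i^k\le C/\lambda$ rather than a subsequence contradiction, and you fill in details the paper leaves implicit in part (iii) (the Lipschitz transfer from $\nabla_i Q_k(x^{k+1})$ to $\nabla_i f(x^k)$, the constancy of the frozen weight, and the $\limsup$/$\liminf$ inversion), which is a refinement rather than a different approach.
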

\begin{proof}
(i) If $\epsilon^k_i \to 0$, assume by contradiction  
there exists $\Scal\subset \mathbb{N}$ such that 
$\{|x_i^k |\}_{\Scal} \to 0$.  It follows that 
$\{w_i^k\}_{\Scal} \to \infty$ since $\epsilon^k_i \to 0$. 
Therefore, there exists sufficiently large $\tilde k\in\Scal$, such that 
$w(x_i^{\tilde  k}, \epsilon_i^{\tilde  k}) > C/\lambda $ for some ${\tilde  k} \in \mathbb{N}$;  then  the optimality condition \eqref{kkt subproblem} 
	implies $x_i^{{\tilde  k} +1} = 0$. Otherwise we have 
	$ | \nabla_i Q_{\tilde  k}(x^{\tilde  k +1}) | =   \lambda w(x_i^{\tilde  k}, \epsilon_i^{\tilde  k})  > C $, 
	contradicting \cref{prop.sign}.  Monotonicity of $(\cdot)^{p-1}$ and  $0 + \epsilon_i^{\tilde k +1} \le |x_i^{\tilde k}| + \epsilon_i^{\tilde k}$ yield  
	\[w(x_i^{{\tilde  k}+1}, \epsilon_i^{{\tilde  k}+1}) =  p(0+ \epsilon_i^{{\tilde  k}+1})^{p-1}  \ge   p( |x_i^{\tilde k}| + \epsilon_i^{\tilde  k})^{p-1}  =   w(x_i^{\tilde  k}, \epsilon_i^{\tilde  k}) > C.\]
	By induction  we know that  
	$x_i^k \equiv 0$ for any $k > \tilde  k$.  Therefore, $\epsilon_i^k$ is not updated for all $k > \tilde k$ according to \eqref{update.eps}---a contradiction. 
	Therefore, $\liminf_{k\to\infty} |x_i^k| > 0$.

(ii) If $\epsilon^k_i$ is bounded away from 0, meaning it is never reduced after some iteration $\tilde k$, then 
we know $x_i^k \equiv 0$ for all 
$k > \tilde k$.

(iii)  if $\epsilon_i^k$ is not updated after $\tilde k$, it means $x_i^k \equiv 0$ for any $k > \tilde k$.  Therefore, by the optimality condition of the subproblem,  
we have  $| \nabla_iQ_k(x^{k+1})| \le  w_i^{\tilde k}$
for all $k > \tilde k$, meaning (iii) is true. 
\end{proof}

It is easy to verify  the analysis in \S\ref{sec.l1lp}   still holds true, and any limit point is still first-order optimal 
for  \eqref{prob.lpproblem}.  

\subsection{Line search} 
The satisfaction of \cref{ass.basic}(ii) by  \cref{alg.framework} 
 could be impractical since it requires the prior knowledge of 
the Lipschitz constant $L_f$ of $f$. In this subsection, we propose a line search strategy 
that still guarantee the convergence of IRL1 without knowing the value of $L_f$. 
Notice that the purpose of requiring $M$ to satisfy  \cref{ass.basic}(ii) is to guarantee 
 \eqref{q les f} in the proof of \cref{prop.sign}.  
 Alternatively, we can directly require the model $Q_k(\cdot)$ yields a new iterate $x^{k+1}$  satisfying a similar condition 
 \begin{equation}\label{ls.cond} f(x^k)  - f(x^{k+1}) \ge Q_k(x^k) - Q_k(x^{k+1}) + \gamma \|x^k-x^{k+1}\|_2^2\end{equation}
 for prescribed $\gamma > 0$. 
 To achieve this,  we can repeatedly solve the subproblem and 
   convexify the subproblem $Q_k$ by adding a proximal term 
   to $Q_k(\cdot)$ if  \eqref{ls.cond} is not satisfied, i.e., setting 
  \[  Q_k(x) \gets  \ Q_k(x) + \tfrac{\Gamma}{2}\|x-x^k\|^2_2.\]
  
\begin{algorithm}[htp]
	\caption{General  framework of  IRL1 methods with line search (IRL1-LS)}
	\label{alg.framework.ls}
	\begin{algorithmic}[1]
		\STATE{\textbf{Input:} $\mu\in(0,1), \gamma \in(0,+\infty),  \epsilon^0\in\mathbb{R}^n_{++}$ and $x^0 $.}
		\STATE{\textbf{Initialize: set $k=0$}. }
		\REPEAT 
		\STATE Reweighing:  $w(x_i^k, \epsilon_i^k)= p(|x^k_i|+\epsilon^k_i)^{p-1}.$ 
		\STATE Line search: find the smallest $\Gamma \ge 0$ such that 
		 \[ \begin{aligned} 
		 Q_k(x) \gets & \ Q_k(x) + \tfrac{\Gamma}{2}\|x-x^k\|^2_2\\
		  x^{k+1} \gets &\ \underset{x\in \mathbb{R}^n}{\text{argmin}} \quad \bigg\{Q_k(x) + \lambda \sum\limits_{i=1}^{n} w(x_i^k, \epsilon_i^k) |x_i| \bigg\}\\
		 f(x^k)  - f(x^{k+1}) \ge &\  Q_k(x^k) - Q_k(x^{k+1}) + \gamma \|x^k-x^{k+1}\|_2^2.
 \end{aligned}
 \]
		\STATE Set $\epsilon^k \le \mu \epsilon^{k-1}$, $k\gets k+1$. 	
		\UNTIL{convergence}
	\end{algorithmic} 
\end{algorithm}

   We state the IRL1 method with line search (IRL1-LS) in \cref{alg.framework.ls}, where the appropriate value of $\Gamma$ could be 
   selected as the smallest element in $\{0,  \bar \Gamma^0, \bar\Gamma^1, \bar\Gamma^2, \ldots \}$ with given $\bar\Gamma > 1$
    such that \eqref{q les f} is true. 
    This needs the solution of several additional   subproblems for each iteration. 
    Obviously, this line search procedure will terminate in finite trials since \eqref{q les f} is always satisfied 
    for any $M > L_f + 2\gamma$.   Replacing \eqref{q les f} by \eqref{ls.cond}  in the proof of \cref{prop.sign}, 
   we can obtain similar result to \cref{prop.sign} as below. 
    \begin{proposition}\label{prop.sign1} Suppose  \cref{ass.lip} and \ref{ass.basic}(i) hold. Let $\{(x^k, \epsilon^k)\}$ be the sequence generated by \cref{alg.framework.ls}.  It follows that $F(x, \epsilon)$ is monotonically decreasing over $\{(x^k,\epsilon^k)\}$ and the reduction satisfies
	\begin{align}	
	F(x^0, \epsilon^0) - F(x^k; \epsilon^k) & \ge   \gamma \sum_{i=0}^{k-1} \|x^{k+1}-x^k\|^2_2. 
	\label{eq:convergence31}
	\end{align}
	Moreover,	$\lim\limits_{k\to\infty}\|x^{k+1} - x^k\|_2 = 0$, and there exists $C> 0$ such that 
	$\|\nabla Q_k(x^{k+1})\|_\infty \le C$ for any $k \in \mathbb{N}$.
\end{proposition}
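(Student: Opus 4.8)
The plan is to reproduce the proof of \cref{prop.sign} almost verbatim, substituting the line-search descent condition \eqref{ls.cond} for the inequality \eqref{q les f}, since in \cref{prop.sign} that inequality was the only place the quantitative part of \cref{ass.basic}(ii) (the constant $M>L_f/2$) actually entered. Before doing so I would first record that the line search in \cref{alg.framework.ls} is well defined, i.e.\ it terminates after finitely many trials and returns a uniformly bounded $\Gamma_k$. For this I would note that once the added proximal term $\tfrac{\Gamma}{2}\|x-x^k\|_2^2$ makes the modified model strongly convex with constant exceeding $L_f+2\gamma$, the very Taylor-expansion argument that produced \eqref{q les f} forces \eqref{ls.cond} with the prescribed $\gamma$; because $\Gamma$ is searched over the geometric grid $\{0,\bar\Gamma^0,\bar\Gamma^1,\dots\}$ and the base model is convex with $\nabla Q_k(x^k)=\nabla f(x^k)$, the accepted value satisfies $\Gamma_k\le\bar\Gamma_{\max}$ for a constant $\bar\Gamma_{\max}$ independent of $k$. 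This uniform bound is precisely what replaces the prescribed Lipschitz constant $L$ later.

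Next I would assemble the descent inequality. The estimate \eqref{w les w} is untouched, as it rests only on the concavity of $a\mapsto a^p$ and the definition of the weights $w_i^k$, and the monotonicity $\epsilon_i^{k+1}\le\mu\epsilon_i^k$ still gives $(|x_i^{k+1}|+\epsilon_i^{k+1})^p\le(|x_i^{k+1}|+\epsilon_i^k)^p$. Combining \eqref{ls.cond} with \eqref{w les w} yields the analogue of \eqref{eq:convergence1}, namely $\Delta F(x^{k+1},\epsilon^{k+1})\ge\Delta G(x^{k+1};x^k,\epsilon^k)+\gamma\|x^k-x^{k+1}\|_2^2$. I would then rerun the computation in \eqref{eq:convergence2} using only the convexity of the modified $Q_k$ together with the subproblem optimality condition \eqref{kkt subproblem}; convexity alone already gives $\Delta G(x^{k+1};x^k,\epsilon^k)\ge 0$, and the strong-convexity bonus $\tfrac{M}{2}\|x^{k+1}-x^k\|_2^2$ present in \cref{prop.sign} is no longer needed because the quadratic term is now supplied by $\gamma$. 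Hence $\Delta F(x^{k+1},\epsilon^{k+1})\ge\gamma\|x^k-x^{k+1}\|_2^2$, and summing over $t=0,\dots,k-1$ produces \eqref{eq:convergence31}.

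The limiting statements then follow as in \cref{prop.sign}. From \eqref{eq:convergence31} I would deduce $\{x^k\}\subset\Lcal(F^0)$, so $\{x^k\}$ is bounded and $\liminf_{k}F(x^k,\epsilon^k)>-\infty$ by \cref{ass.basic}(i); letting $k\to\infty$ forces $\sum_{t=0}^{\infty}\|x^{t+1}-x^t\|_2^2<\infty$ and hence $\|x^{k+1}-x^k\|_2\to 0$. For the uniform gradient bound I would use that $\nabla Q_k(x^k)=\nabla f(x^k)$ is preserved under the modification, since the proximal term has vanishing gradient at $x^k$, so that $\|\nabla Q_k(x^{k+1})\|_\infty\le\|\nabla Q_k(x^{k+1})-\nabla Q_k(x^k)\|_2+\|\nabla f(x^k)\|_\infty$. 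The first term is bounded by $L_{Q_k}\|x^{k+1}-x^k\|_2$, where $L_{Q_k}$ is the gradient-Lipschitz constant of the modified model; by $\Gamma_k\le\bar\Gamma_{\max}$ (and the uniformly bounded base constant that holds for the proximal and (quasi-)Newton choices) this $L_{Q_k}$ is uniform in $k$, while boundedness of $\{x^k\}$ controls both $\|x^{k+1}-x^k\|_2$ and $\|\nabla f(x^k)\|_\infty$. This delivers the constant $C$.

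I expect the main obstacle to be precisely this last uniform bound $C$ on $\|\nabla Q_k(x^{k+1})\|_\infty$. In \cref{prop.sign} it came for free from the prescribed Lipschitz constant $L$ in \cref{ass.basic}(ii), but here that assumption is dropped, so the crux is to certify that the adaptively chosen $\Gamma_k$ cannot grow without bound, which reduces to the termination threshold $L_f+2\gamma$ established in the first step. Everything else is a mechanical transcription of \cref{prop.sign}, with $\gamma$ playing the role of $M-\tfrac{L_f}{2}$ and with plain convexity replacing strong convexity in the $\Delta G$ estimate.
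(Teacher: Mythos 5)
Your proposal is correct and follows essentially the same route as the paper, which itself only remarks that the result is obtained by substituting \eqref{ls.cond} for \eqref{q les f} in the proof of \cref{prop.sign} (with $\gamma$ playing the role of $M-\tfrac{L_f}{2}$ and the minimality of $x^{k+1}$ giving $\Delta G\ge 0$). Your additional care on the two points the paper glosses over --- finite termination of the line search with a uniform bound on $\Gamma_k$, and the resulting uniform bound $C$ on $\|\nabla Q_k(x^{k+1})\|_\infty$, which does implicitly require a uniform gradient-Lipschitz constant for the base models even though \cref{ass.basic}(ii) is formally dropped --- is a welcome tightening rather than a departure.
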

Using \cref{prop.sign1},  all the results in subsections \ref{sec.local sign}--\ref{sec.updating} 
still hold true, which 
can be verified trivially and are therefore skipped.

\section{Connection with weighted $\ell_1$ regularization}
\label{sec.connection}

 
From what we have obtained from previous sessions, we can claim that
\[ 
\boxed{\ell_p \text{ regularization is locally equivalent to a weighted $\ell_1$ regularization}}
\]
with $p\in(0,1]$.  This result is summarized below.
%
\begin{theorem}
Any point $x^*$ satisfying the first-order necessary condition  \eqref{eq:optimalcondition} of \eqref{prob.lpproblem} is also optimal for 
the weighted $\ell_1$ regularization problem 
\begin{equation}
\min_{x\in\mathbb{R}^n}  f(x) + \lambda \sum_{i=1}^n w_i |x_i|,
\end{equation}
with weights 
$w_i = p |x_i^*|^{p-1}, i\in \Ical(x^*)$ and  $w_i > |\nabla_i f(x^*)|/\lambda, i\in\Acal(x^*)$. 
\end{theorem}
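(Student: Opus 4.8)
The plan is to show that $x^*$ satisfies the first-order optimality condition of the weighted $\ell_1$ problem, since for a convex regularizer this condition characterizes its stationary points. Writing $g(x) := f(x) + \lambda \sum_{i=1}^n w_i |x_i|$, the regularizer $\lambda\sum_i w_i|x_i|$ is convex and $f$ is continuously differentiable, so the sum rule yields $\partial g(x) = \nabla f(x) + \lambda \sum_{i=1}^n w_i\,\partial|x_i|$, and $x^*$ is stationary precisely when $0 \in \partial g(x^*)$. Componentwise this reads $\nabla_i f(x^*) + \lambda w_i \sign(x_i^*) = 0$ for $i \in \Ical(x^*)$ and $|\nabla_i f(x^*)| \le \lambda w_i$ for $i \in \Acal(x^*)$, using $\partial|x_i^*| = \{\sign(x_i^*)\}$ on the support and $\partial|x_i^*| = [-1,1]$ on its complement. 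The whole proof is then a componentwise verification of these two relations.

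First I would treat the support $\Ical(x^*)$. For these indices the prescribed weight is $w_i = p|x_i^*|^{p-1}$, and substituting this into the first-order necessary condition \eqref{eq:optimalcondition} of \eqref{prob.lpproblem}, namely $\nabla_i f(x^*) + \lambda p |x_i^*|^{p-1}\sign(x_i^*) = 0$, produces exactly $\nabla_i f(x^*) + \lambda w_i \sign(x_i^*) = 0$. Hence the stationarity condition of the weighted $\ell_1$ problem holds on $\Ical(x^*)$ by the very construction of the weights. Next I would treat the complement $\Acal(x^*)$, where $x_i^* = 0$; here the required condition is $|\nabla_i f(x^*)| \le \lambda w_i$, which is implied (indeed strictly) by the chosen weights $w_i > |\nabla_i f(x^*)|/\lambda$. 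Combining the two cases gives $0 \in \partial g(x^*)$.

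I expect the only genuine subtlety to be the precise meaning of \emph{optimal}, since the regularizer is convex but $f$ need not be, so first-order optimality is in general necessary but not sufficient for global optimality. When $f$ is convex, $g$ is convex and $0 \in \partial g(x^*)$ already certifies that $x^*$ is a global minimizer of the weighted $\ell_1$ problem; this is the case relevant to the MAP interpretation, where minimizing $g$ is equivalent to maximizing the posterior under independent Laplace priors with scale parameters $1/(\lambda w_i)$. In the general nonconvex setting the conclusion should be read as $x^*$ being a first-order stationary point of the weighted $\ell_1$ problem, consistent with the reduced smooth picture identified by \cref{thm.stable.support} and \cref{thm.stable.sign}, and the verification above is unchanged. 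The argument therefore carries no heavy computation; its content is entirely the matching of the two optimality systems through the definition of the weights.
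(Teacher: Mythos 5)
Your proposal is correct and is exactly the verification the result calls for (the paper itself states this theorem without a written proof): matching the stationarity system of the weighted $\ell_1$ problem componentwise against \eqref{eq:optimalcondition} on $\Ical(x^*)$ and against the subdifferential bound $|\nabla_i f(x^*)| \le \lambda w_i$ on $\Acal(x^*)$. Your caveat that ``optimal'' must be read as first-order stationary unless $f$ is convex is also well taken, since \eqref{eq:optimalcondition} is only a necessary condition and $f$ is not assumed convex.
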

 This relationship between $\ell_p$ and weighted 
$\ell_1$ regularizations is demonstrated in Figure \ref{fig.lpl1}. The contour of the weighted $\ell_1$ regularization problem is very similar to 
that of the $\ell_{0.5}$ regularization problem around the optimal solution, and they 
both attain minimum at the same point. For $\ell_1$ regularization, the contour 
is different from $\ell_{0.5}$ and it does not attain minimum at the same point as $\ell_{0.5}$.

\begin{figure}[h]
	\includegraphics[width=5.1in]{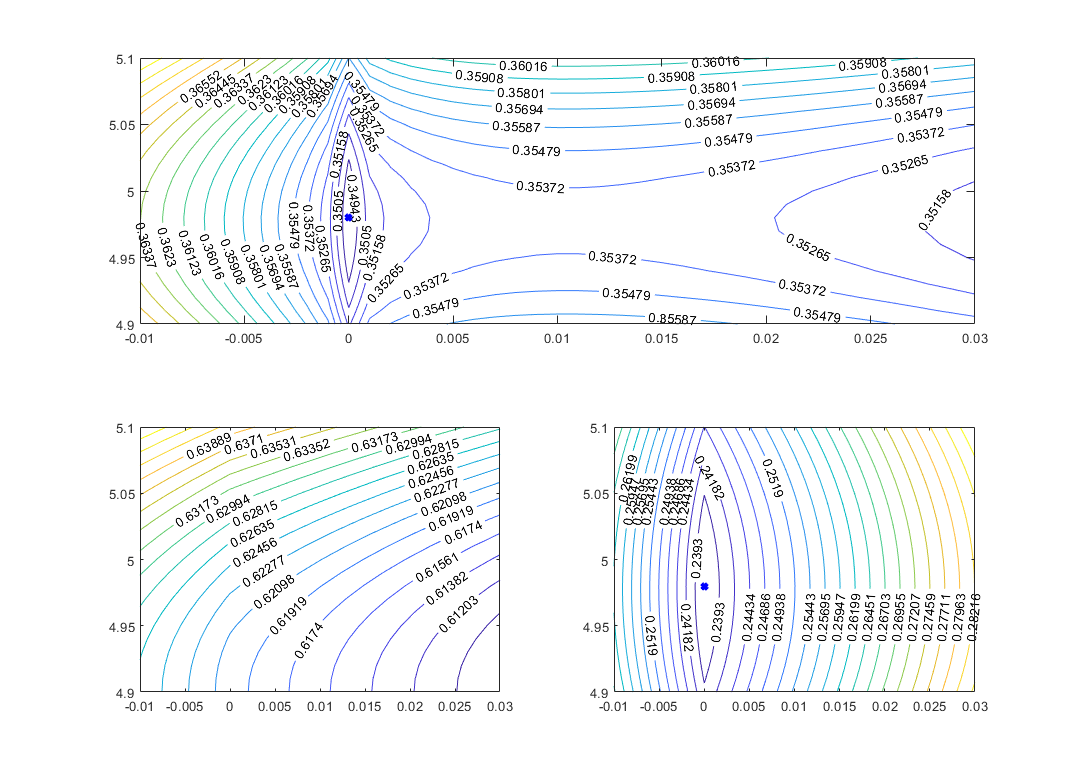}
	\centering
	\caption{Upper: the contour of $\ell_p$ ($p=0.5$) regularization problem with 
		$F(x)=(x_1-0.5)^2+(x_2-5)^2 + 0.1\|x\|_p^p$.  Bottom left: contour of 
		$F(x)=(x_1-0.5)^2+(x_2-5)^2 + 0.1\|x\|_1$.  Bottom right: contour of 
		$F(x)=(x_1-0.5)^2+(x_2-5)^2 + 0.1(20|x_1|+p(4.98)^{p-1}|x_2|)$.  The $\ell_p$ and weighted 
		$\ell_1$ problems attain optimality at the same sparse solution $(0,0.48)$.  The $\ell_1$ problem 
		does not have sparse optimal solution. }
	\label{fig.lpl1}
\end{figure}


\subsection{Maximum A Posterior (MAP)}
It is well known \cite{5256324,sokolov2019strategic} that  least squares  with $\ell_1$ regularization
 is  equivalent to finding a mode of posterior distribution 
for a linear Gaussian model with i.i.d. Laplace prior
\[ y = Ax + \theta, \text{ where } A\in\mathbb{R}^{m \times n} \text{ and }  \theta\sim N(0,\sigma^2 I_m),\]
 with $x_i \sim Laplace(0,b)$ for $i=1,\ldots,n$. The solution of MAP is as follows
\[
\hat{x} = \arg\max_x \ \log p(x|A,y, b) = \arg\min_x \tfrac{1}{2} \|Ax-y\|_2^2 + \frac{\sigma^2}{ b} \|x\|_1. 
\] 
In this case, it can be shown that   the MAP estimator for the linear model 
  with Laplace prior is the optimal solution of  
 \[\min_x \tfrac{1}{2}\|Ax-y\|_2^2 +\frac{\sigma^2}{ b} \|x\|_1.\]
 
Now suppose $x^*$ is a first-order optimal solution for the $\ell_p$ regularized least squares 
\begin{equation}\label{prob.rw1problem.lp}
\begin{aligned}
& \underset{x\in \mathbb{R}^n}{\min}
& &  \tfrac{1}{2}\|Ax-y\|_2^2+ \lambda \sum_{i=1}^{n} |x|_p^p. 
\end{aligned}
\end{equation} 
Correspondingly,  we consider  a linear Gaussian model with $\sigma^2 = \lambda$ and 
$x_i \sim Laplace(0, b_i)$, $i=1,\ldots, n$ be independent Laplace distributions with 
\[ b_i = p|x_i^*|^{1-p}, i\in\Ical(x^*)\quad \text{and}\quad b_i \le  \lambda/|\nabla_if(x^*)|, i\in\Acal(x^*).\]
Then the solution of MAP is as follows

\begin{equation}
\hat{x} = \arg\min_x\log p(x|A,y, b) =\arg\min_x \tfrac{1}{2}\|Ax-y\|_2^2 + \sigma^2\sum_{i=1}^n \tfrac{1}{b_i}|x_i|.
\end{equation}
It can be seen that the MAP estimator  for the linear model 
  with independent and non-identical Laplace prior is the optimal solution of  
\begin{equation}\label{prob.rw1problem}
\begin{aligned}
& \underset{x\in \mathbb{R}^n}{\min}
& &  \tfrac{1}{2}\|Ax-y\|_2^2+ \sigma^2 \sum_{i=1}^{n} \tfrac{1}{b_i}|x_i|. 
\end{aligned}
\end{equation} 
Therefore,   $x^*$  corresponds with 
  a MAP estimator for   the linear model 
  with independent and non-identical Laplace prior defined above. 
If we apply an IRL1 to solve  \eqref{prob.rw1problem.lp} with using the updating strategy \eqref{update.eps},  
then the   limit point of the weights $w_i^*$ yields an estimate of the $b_i$ in such a MAP model, i.e., 
$\hat b_i  =  1/w_i^*$.

\section{Numerical results}

In this section, we perform sparse signal recovery experiments (similar to \cite{yu2019iteratively,zeng2016sparse, figueiredo2007gradient,wen2018proximal}) to investigate  the behavior of  IRL1 for solving $\ell_p$ problem. 
\subsection{Experiment Setup}
We generate an $m\times n$ matrix $A$ with i.i.d. $\mathcal{N}(0,1/m)$ entries. Then set $y=Ax_{true} + e$, where the origin signal $x_{true}$ contains $K$ randomly placed $\pm1$ spikes and  $e\in \mathbb{R}^{m}$ is i.i.d.  $\mathcal{N}(0,10^{-4})$. 

We test  \cref{alg.framework.ls} for small size problems with $(m,n,K) =(256,512,64)$ and large size problems $(m,n,K)=(1024,2048,256)$. All experiments start from origin and have the same termination criteria that
\[
\max_{i\in \mathcal{I}(x)} | \nabla_if(x)+\lambda p |x_i|^{p-1} \sign( x_i) | \leq \texttt{opttol},
\]
where \texttt{opttol}$ = 10^{-6}$ is the prescribed tolerance. We also terminated if  the maximum iteration number 500 is reached. Unless otherwise mentioned, we use the following parameters to run the  experiments $\mu=0.9, \beta = 0.1, \epsilon_0 = 1, \bar{\Gamma} = 1.1, \gamma = 0.0001, p=0.5 \text{ and } \lambda = 0.05$;  $\epsilon$ is updated using the (SR) strategy. 
\subsection{Locally stable support}
 In this subsection, we run experiments to see the number of  iterations the algorithm needs to find the stable support as shown in \cref{thm.stable.support}. Let $N_S$ be the iteration number for the support to be stabilized, $N$ be the final iteration number to reach the termination criteria. Then, the ratio $N_S/N$ shows at which stage the iterate starts to obtain the stable support. 
 
 The histogram of $N_S/N$ for 1000 problems of each  size is shown in \cref{fig.new_lss}. The plot shows that the algorithm is able to reach the stable support stage in less than $50\%$ of final iterations for $98\%$ of problems. This means the stable support is identified at relatively early stage during the problem solving.
\begin{figure}[htbp]  
	\centering 
	\subfigure[The histogram about $N_s/N$ for small size problems.]{\includegraphics[width=2.75in]{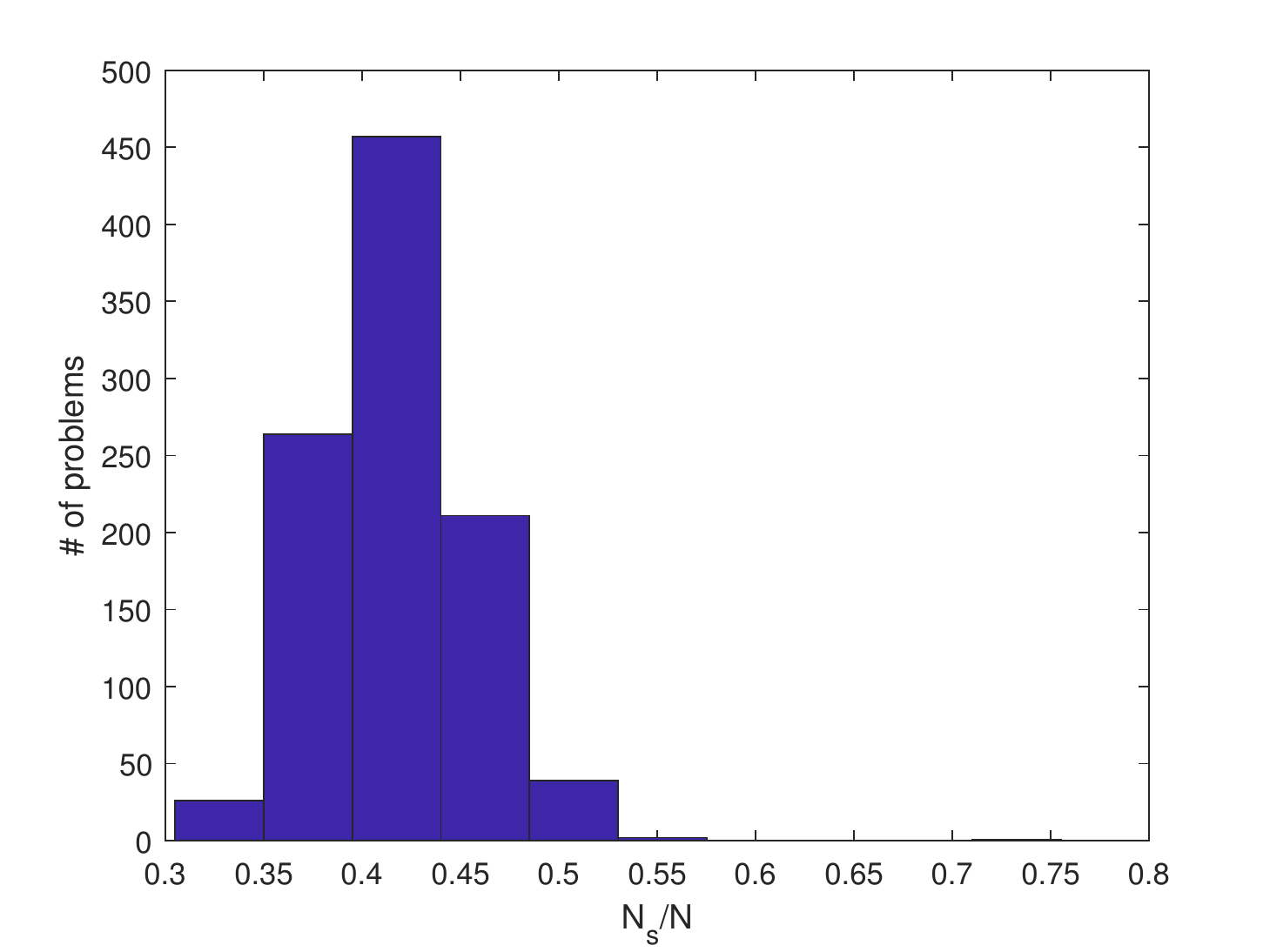}}
	\label{fig:subfig:a} 
	\subfigure[The histogram about $N_s/N$ for large size problems.]{\includegraphics[width=2.75in]{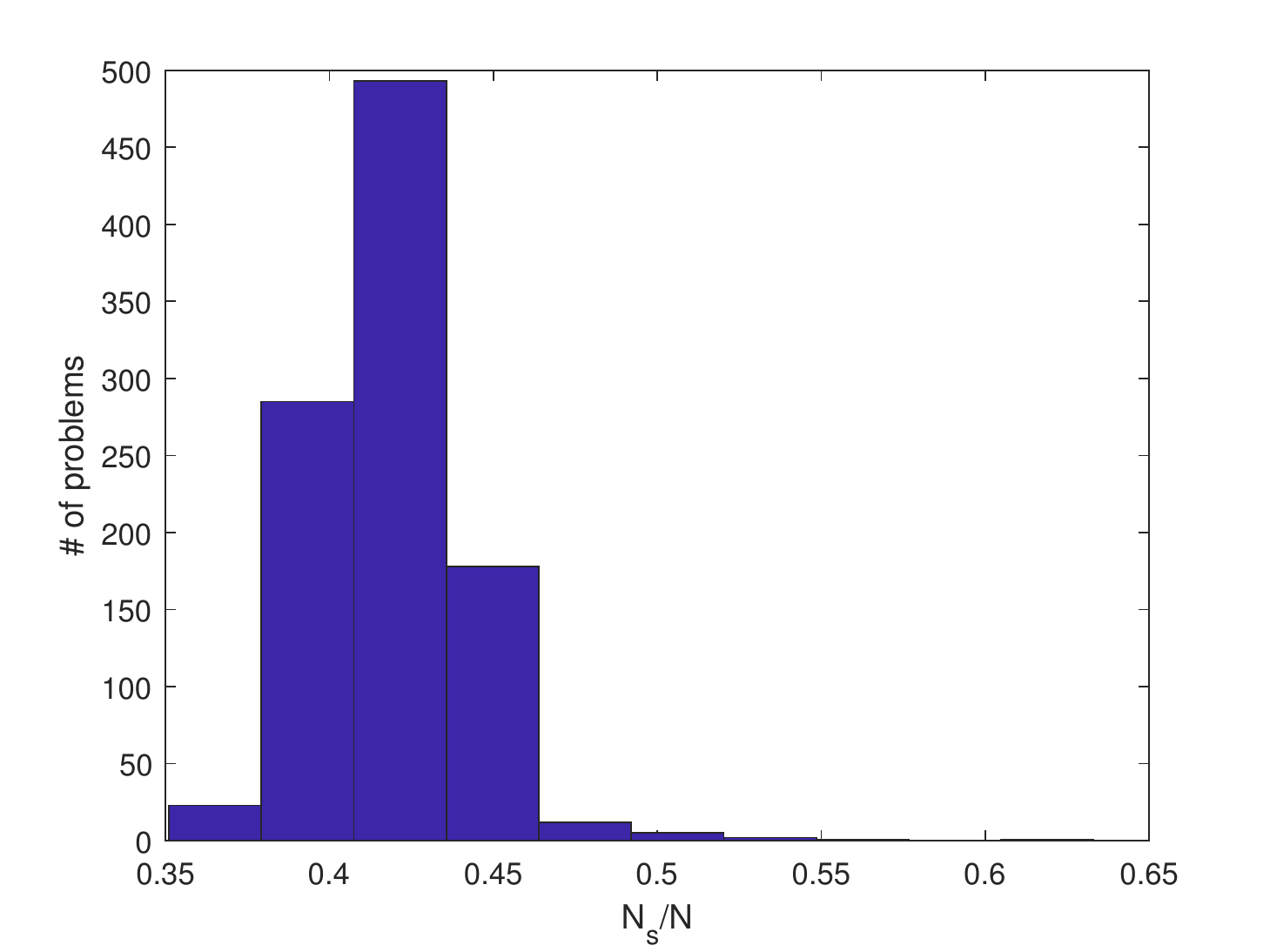}}
	\caption{The property of locally stable support for different size problems. } 
	\label{fig.new_lss}
\end{figure}
\subsection{The impact of epsilon updating strategy}
In this subsection, we test the benefits brought by our proposed  $\epsilon$ updating strategy \eqref{update.eps}.   
 We compare updating strategy $\epsilon^{k+1}=\mu \epsilon^k$ against \eqref{update.eps} updating strategy on 1000 simulated problems of each size as mentioned in experiment setup section. 

 We plot the cumulative curve of the percentage of success cases over iteration number  in \cref{fig.update_strategy1}. It clearly shows  \eqref{update.eps} updating strategy outperforms $\epsilon^{k+1}=\mu \epsilon^k$ updating. Specifically, the \eqref{update.eps} updating strategy has $90\%$ problems solved within $260$ iterations compared with around $400$ for  $\epsilon^{k+1}=\mu \epsilon^k$ updating. Besides,  the \eqref{update.eps} updating strategy solve all 1000 problems, while around $6\%$ of the problems are still unsolved for $\epsilon^{k+1}=\mu \epsilon^k$ updating.
 \begin{figure}[htbp] 
 	\centering 
 	\subfigure[The percentage of problems reaching termination criteria for small size problems]{\includegraphics[width=2.75in]{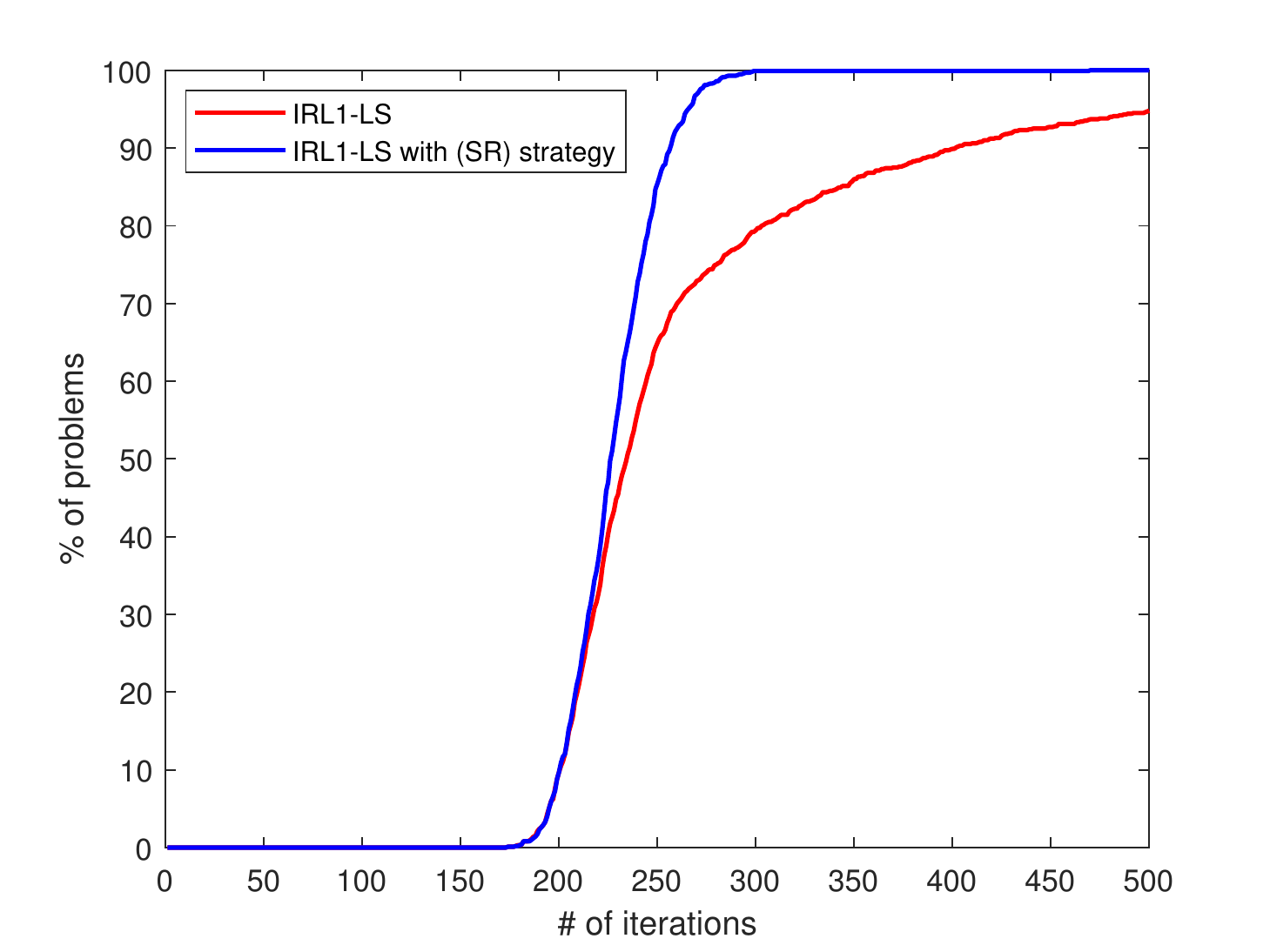}}
 	\subfigure[The percentage of problems reaching termination criteria for large size problems]{\includegraphics[width=2.75in]{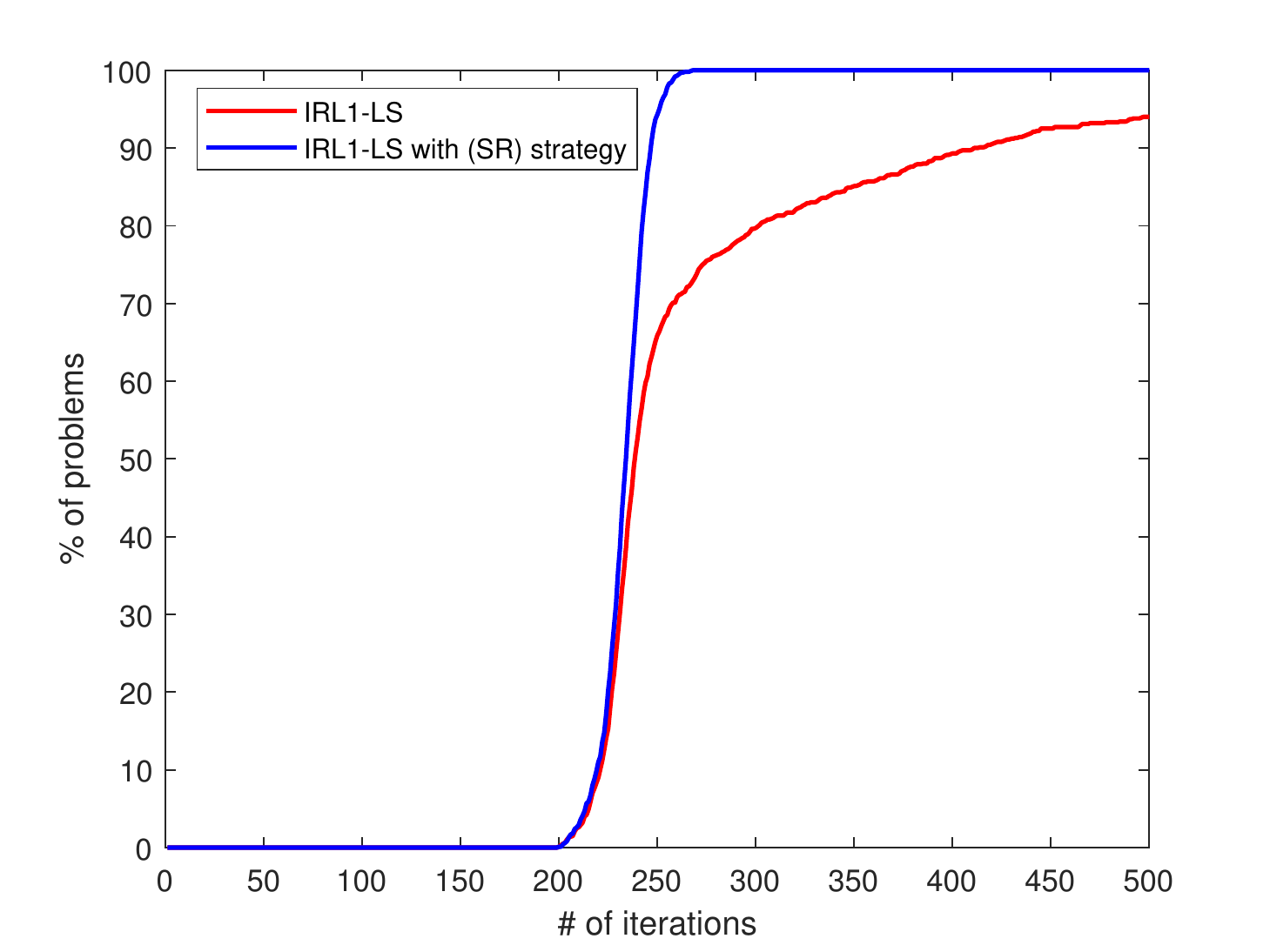}}
  	\caption{Comparison with different update strategies. } 
 	\label{fig.update_strategy1}
 \end{figure}
 
 \subsection{The impact of epsilon initialization} In this experiment, we set $\epsilon_0=$ 0.001, 0.005, 0.01 and 0.1 to see how the initialization of $\epsilon$ impact the convergence. 
 
 We plot the number of problems converging to a solution with the correct support (satisfy $\mathcal{I}(x_{true})=\mathcal{I}(x^*)$) in \cref{fig.eps0}. We make the following observation
 \begin{itemize}
 \item Larger $\epsilon_0$ has higher probability converge to the global optimal support.  
 \item If $\epsilon_0$ is too small, the algorithm will get trapped to some bad local solution. In our experiment, when $\epsilon_0=0.001$, there is no problem finding the correct support.
 \end{itemize}

 \begin{figure}[htbp] 
 	\centering 
 	\subfigure[The number of small size problems satisfying $\mathcal{I}(x_{true})=\mathcal{I}(x^*)$.]{\includegraphics[width=2.75in]{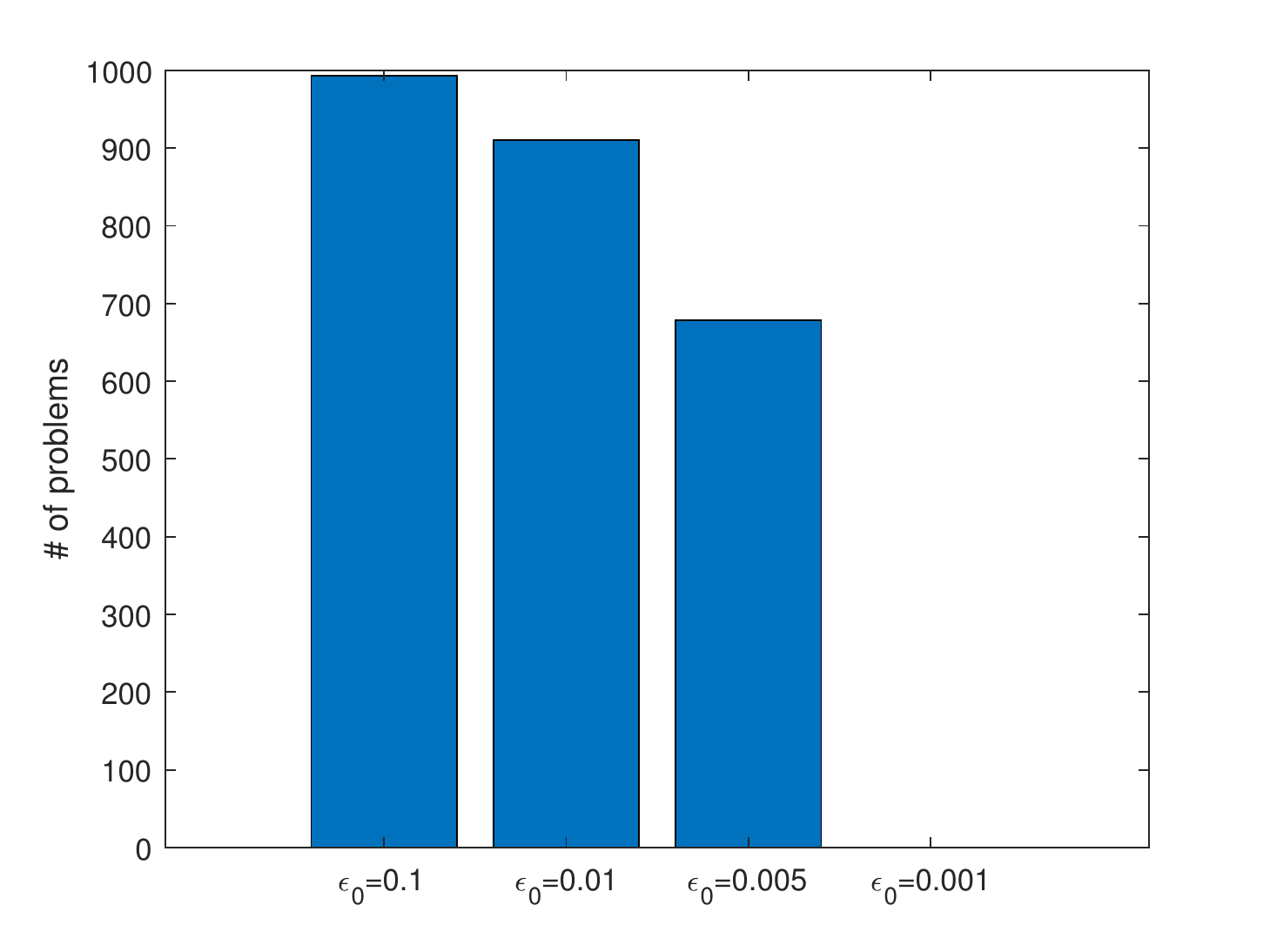}}
 	\subfigure[The number of large size problems satisfying $\mathcal{I}(x_{true})=\mathcal{I}(x^*)$.]{\includegraphics[width=2.75in]{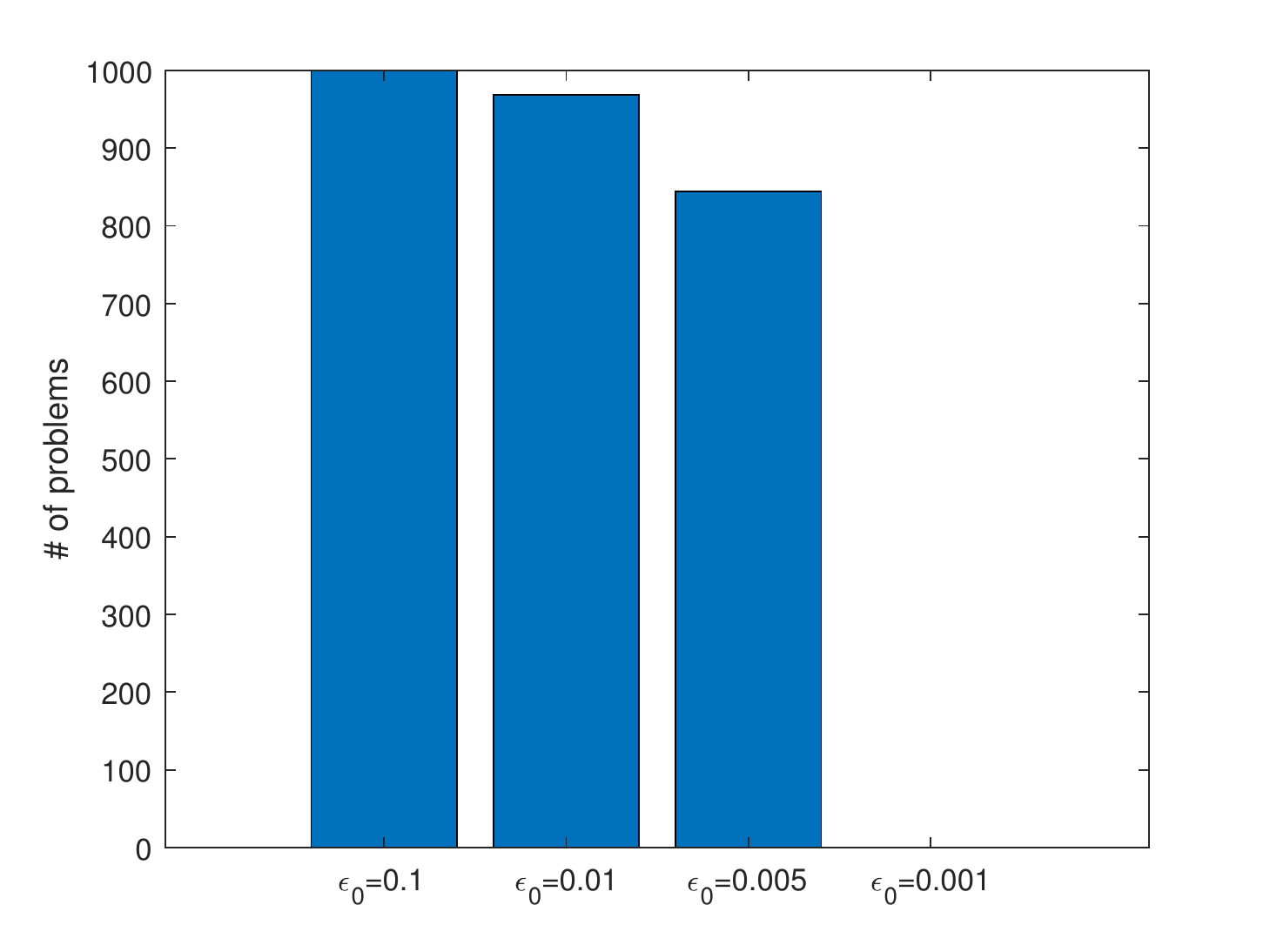}}
 	
 	\caption{The result of different epsilon initialization. It shows that the number of problems finding correct support over 1000 problems when $\epsilon_0 = 0.001, 0.005, 0.01,0.1$.  } 
 	\label{fig.eps0}
 \end{figure}


\newpage
\bibliographystyle{siamplain}
\bibliography{references}

\end{document}